\newcommand{\be}{\begin{equation}} 
\newcommand{\ee}{\end{equation}}
\newcommand{\Rset}{\mathbb{R}}
\newcommand{\Cset}{\mathbb{C}}
\newcommand{\Nset}{\mathbb{N}}
\newcommand{\eu}{\ensuremath{\mathrm{e}}}
\newcommand{\iu}{\ensuremath{\mathrm{i}}}
\newcommand{\du}{\ensuremath{\mathrm{d}}}
\def\texttiny#1{{\text{\tiny{#1}}}}
\def\DC{{}^{\texttiny{C}}\! D}
\def\DS{{}^{\texttiny{S}}\! D}
\def\IS{{}^{\texttiny{S}}\! I}
\def\IRL{{}^{\texttiny{RL}}\! I}
\DeclareMathOperator{\Imag}{Im}
\theoremstyle{definition}
\newtheorem{theorem}{Theorem}
\newtheorem{proposition}[theorem]{Proposition}
\newtheorem{lemma}[theorem]{Lemma}
\theoremstyle{remark}
\newtheorem*{remark}{Remark}
\title{\bf A computational approach to exponential-type variable-order fractional differential equations}
\author{Roberto~Garrappa$^{a}$\thanks{E-mail: roberto.garrappa@uniba.it} 
$\ $ and 
Andrea~Giusti$^{b}$\thanks{E-mail: agiusti@phys.ethz.ch}
\\
\\
$^a${\em Department of Mathematics, University of Bari}
\\
{\em Via E. Orabona 4, 70125 Bari, Italy}
\\
\\
$^b$ {\em Institute for Theoretical Physics, ETH Zurich}
\\
{\em Wolfgang-Pauli-Strasse 27, 8093 Zurich, Switzerland}
}
\begin{document}
\maketitle

\begin{abstract}
We investigate the properties of some recently developed variable-order differential operators involving order transition functions of exponential type. Since the characterization of such operators is performed in the Laplace domain, it is necessary to resort to accurate numerical methods to derive the corresponding behaviors in the time domain. In this regard, we develop a computational procedure to solve variable-order fractional differential equations of this novel class. Furthermore, we provide some numerical experiments to show the effectiveness of the proposed technique.
\end{abstract}

\section{Introduction}

\newcommand\blfootnote[1]{%
	\begingroup
	\renewcommand\thefootnote{}\footnote{#1}%
	\addtocounter{footnote}{-1}%
	\endgroup
}

\blfootnote{This is the accepted version of the paper R.Garrappa, A.Giusti, \emph{A computational approach to exponential-type variable-order fractional differential equations}, J. Sci. Comp. 96 {\bf 2023} published with doi: \url{https://doi.org/10.1007/s10915-023-02283-6}. Matlab codes related to this paper are available at \url{https://www.mathworks.com/matlabcentral/profile/authors/2361481}}

Constant-order (CO) fractional operators are powerful tools used to describe various phenomena displaying non-local properties or persistent memory ({\em i.e., non-locality in time}). However, in certain scenarios the nature of non-localities can itself vary with respect to time and/or space and one is therefore forced to resort to variable-order (VO) operators (see, {\em e.g.}, \cite{DarveDeliaGarrappaGiustiRUbio2022,Giusti:2020rul,Giusti:2020kcv}). 

Over the years, several proposals for generalizing CO operators to VO have appeared in the literature. Often these attempts just consist in a simple replacement of the constant fractional order with a function of the independent variable.  It is worth noting, however, that even this simple replacement can be performed
in more than one way (see, {\em e.g.},  \cite{Zheng2022}) and, as pointed out in \cite{Samko1995a,SamkoRoss1993}, these procedures may come with some shortcomings. For VO derivatives obtained through a naive replacement, as mentioned above, it is indeed often difficult to determine the corresponding VO fractional integrals so that the operators satisfies a generalised fundamental theorem of calculus, thus providing a means to simplify theoretical and numerical analysis.

Recently, a notion of VO fractional derivative, based on the pioneering work of 
{\em Giambattista Scarpi} \cite{Scarpi1972a,Scarpi1972b} dating back to 
the early seventies, has been revived. In particular, 
in \cite{GarrappaGiustiMainardi2021} this definition of VO derivative has been reviewed and framed in terms of Luchko's theory of generalised fractional derivatives and integrals \cite{Luchko2020_FCAA,Luchko2021_Mathematics,Luchko2021_FCAA,LuchkoYamamoto2020}.

Scarpi's formulation of VO integrals and derivatives is obtained as an extension  of CO operators in the Laplace domain, as opposed to time domain. Defining and studying operators in the Laplace domain seem to simplify the theoretical treatment of these mathematical objects. Specifically, it turns out that Scarpi's fractional integrals and derivatives are, in the time domain, expressed in terms of Volterra-type integro-differential operators with weakly-singular kernels and that these kernels satisfy the Sonine condition \cite{Sonine1884}, {\em i.e.}, one of the defining requirements for the validity of a generalised fundamental theorem of calculus  \cite{DiethlemGarrappaGiustiStynes2020,Hanyga2020,SamkoCardoso2003b,SamkoCardoso2003a}.

The numerical treatment of integral and differential equations with these VO operators turns out to be more challenging. This is due to the lack of an explicit time-domain representation for Scarpi's operators. In fact, while for VO operators defined in the time domain there exists well-established numerical techniques to solve the corresponding VO fractional differential equations (VO-FDEs) (see, for instance, \cite{DuSunWang2022,SunChenLiChen2022,ZayernouriKarniadakis2015,ZengZhangKarniadakis2015,ZhaoSunKarniadakis2015,ZhengWang2020,ZhuangLiuAnhTurner2009}) the same does not apply to our case. It is therefore necessary to develop some specific approaches to tackle VO-FDEs in this new framework.

The aim of this work is to provide a further  characterisation of VO operators defined in the LT domain. In particular, we will expand on some preliminary results in \cite{GarrappaGiustiMainardi2021}, originally presented only as numerical experiments, providing precise statements and rigorous proofs. Moreover, we will investigate suitable numerical tools to solve VO-FDEs involving these operators. Specifically, we will discuss convolution quadrature rules based on Lubich's fundamental works \cite{Lubich1988a,Lubich1988b,Lubich2004}. A detailed error analysis will also be presented in order to evaluate weights of the convolution quadrature rule accurately.

Although several VO functions $\alpha(t)$ can be considered, in this work we will focus solely on those of exponential type, thus expanding and building upon previous research
\cite{GarrappaGiustiMainardi2021}. The rationale behind this choice is the fact that such a behaviour provides a smooth transition between an initial order $\alpha_1$ to a final order $\alpha_2$ and the dependence of $\alpha(t)$ on the three parameters (initial and final orders, and the transition ratio) is flexible enough to find potential applications in a wide variety of situations. Nonetheless, the general analysis presented here can be  applied, albeit with some technical differences, to other VO functions.

This work is organised as follows. In Section \ref{S:VOGeneral} we provide a brief review of Scarpi's 
VO fractional calculus. Section \ref{S:Propr_TransFunction_Exp}  describes an order transition of exponential type and analyses the main properties of the corresponding VO integral and derivative, and their implications for the VO relaxation equation. In Section \ref{S:ConvolutionQuadrature} we describe a numerical approach to solve VO-FDEs and we discuss some technical details concerning the accurate computation of weights in the proposed scheme. Lastly, in Section \ref{S:NumericalExperiments} we present some numerical experiments in order to test and validate the proposed computational approach. In Section \ref{sec:conc} we offer some concluding remarks and an outlook on future research.
\section{Variable-order fractional integrals and derivatives}\label{S:VOGeneral}
Let us begin by reviewing the key ideas of Scarpi's approach to VO calculus. To this end, we will follow \cite{GarrappaGiustiMainardi2021}, to which we refer the interested reader for a more detailed discussion on the subject.\footnote{For the sake of completeness, it is worth mentioning further results on Scarpi's operators have been presented in \cite{CuestaKiraneAlsaediAhmad2021}.}

The (CO) Riemann-Liouville integral of order $\alpha >0$ is defined in terms of a convolution integral as
\begin{equation}\label{eq:RLIntegral}
	\IRL^{\alpha}_0 f(t) = \int_0^t  \psi(t-\tau) f(\tau) \du \tau , \quad \psi(t) = \frac{t^{\alpha-1}}{\Gamma(\alpha)}  \, .
\end{equation}

Analogously, the (CO) Caputo fractional derivative of order $0<\alpha<1$ reads
\begin{equation}\label{eq:DCaputo}
	\DC^{\alpha}_0 f(t) =  \int_0^t \phi(t-\tau) f'(\tau) \du \tau, \quad
	\phi(t) = \frac{t^{-\alpha}}{\Gamma(1-\alpha) } .
\end{equation}

These two operators satisfy the so-called {\em fundamental theorem of fractional calculus}, {\em i.e.},
\begin{equation}\label{eq:CO_FundTheorCalculus}
	\DC^{\alpha}_0 \bigl[ \IRL^{\alpha}_0 f(t) \bigr] = f(t)
	\, , \quad
\end{equation}
and a generalisation of the fundamental theorem of calculus, {\em i.e.},
\begin{equation}
	\IRL^{\alpha}_0 \bigl[ \DC^{\alpha}_0 f(t) \bigr] = f(t) - f(0) \, .
\end{equation}

The validity of these properties can be directly linked to the fact that the kernels $\psi(t)$ and $\phi(t)$ satisfy the 
Sonine condition \cite{Sonine1884}, {\em i.e.}, 
\[
\int_0^t \phi(t-\tau) \psi(\tau) \du \tau = 1, \quad \forall t > 0 .
\]

Furthermore, it is worth pointing out that the Laplace transforms (LT) of $\psi(t)$ and $\phi(t)$, appearing in Eq. \eqref{eq:RLIntegral} and Eq. \eqref{eq:DCaputo}, read
\begin{equation}\label{eq:LTKernelStandard}
	\Psi(s) \coloneqq {\mathcal L} \Bigl( \psi(t) \, ; \, s \Bigr) = s^{-\alpha} 
	, \quad
	\Phi(s) \coloneqq {\mathcal L} \Bigl( \phi(t) \, ; \, s \Bigr) = s^{\alpha-1} .
\end{equation}

Now, in order to generalise the definitions in Eq. \eqref{eq:RLIntegral} and Eq. \eqref{eq:DCaputo} let 
\[
\nonumber
\begin{aligned}
	\alpha \, : \, &[0,T]& &\longrightarrow  &[0,1] \\
	& \quad t & &\longmapsto & \alpha (t)
\end{aligned}
\]
be a function such that:
\begin{description}
	\item[A1.] $\alpha$ is locally integrable on $[0,T]$;
	\item[A2.] the analytical form of the LT of $\alpha$, {\em i.e.}, $A(s) = {\mathcal L} \bigl( \alpha(t) \, ; \, s \bigr)$,
	is known;
	\item[A3.] $\lim_{t \to 0^{+}} \alpha(t) = \bar{\alpha}_0$, with $0<\bar{\alpha}_0< 1$.
\end{description}

\begin{remark}
	While A1 guarantees the existence of the LT of $\alpha$, A2 is not strictly necessary but significantly simplifies both the theoretical and numerical analyses. A3 is instead essential to define integral kernels satisfying the Sonine condition (see \cite{GarrappaGiustiMainardi2021}).
\end{remark}

Scarpi's proposal, later reformulated and extended in \cite{GarrappaGiustiMainardi2021}, sparks from the simple observation that in the constant order case $\alpha (t) = \alpha \in (0, 1)$ for all $t \in [0, T]$, then $A(s) = \alpha / s$ and, therefore, the LTs of the kernels $\psi$ and $\phi$ in Eqs. \eqref{eq:LTKernelStandard} can be rewritten as $\Psi(s) = s^{-sA(s)}$ and $\Phi(s)=s^{sA(s)-1}$. Following this line of thought, considering now a function $\alpha (t)$ that satisfies the conditions A1--A3, one can define the operators
\begin{equation}\label{eq:ScarpiIntegralConvolution}
	\IS^{\alpha(t)}_0 f(t) = \int_0^{t} \psi_{\alpha}(t-\tau) f(\tau) \du \tau 
	, \quad
	\psi_{\alpha}(t) \coloneqq {\mathcal L}^{-1} \Bigl( \Psi_{\alpha}(s) \, ; \, t \Bigr) 
\end{equation}
\begin{equation}\label{eq:ScarpiDerivativeConvolution}
	\DS^{\alpha(t)}_0 f(t) = \int_0^t \phi_{\alpha}(t-\tau) f'(\tau) \du \tau
	, \quad
	\phi_{\alpha}(t) \coloneqq {\mathcal L}^{-1} \Bigl( \Phi_{\alpha}(s) \, ; \, t \Bigr) 
\end{equation}
with kernels such that
\begin{equation}\label{eq:ScarpiKernels}
	\Psi_{\alpha}(s) \equiv s^{-s A(s)}
	, \quad 
	\Phi_{\alpha}(s) \equiv s^{s A(s)-1} .
\end{equation}

In other words, Scarpi's operators are defined in terms of kernel functions specified in the Laplace (complex) domain rather than on the time domain.  


We observe that analytical expressions for $\psi_{\alpha}(t)$ and $\phi_{\alpha}(t)$ are, in general, not known in this approach. Hence, the treatment of \eqref{eq:ScarpiIntegralConvolution} or \eqref{eq:ScarpiDerivativeConvolution} turns out to be numerical for the most part. Thus, developing numerical techniques capable of addressing these sort of problems is, in fact, among the main objectives of this work.

It is now easy to see that $\psi_{\alpha}(t)$ and $\phi_{\alpha}(t)$ naturally satisfy the Sonine equation
\[
\int_0^t \phi_{\alpha}(t-\tau) \psi_{\alpha}(\tau) \du \tau = 1, \quad \forall t > 0 ,
\] 
since, in fact, in the Laplace domain it holds that
$$
\Psi_{\alpha}(s) \Phi_{\alpha}(s) = \frac{1}{s} \, ,
$$
as one can easily check from the expressions in Eq. \eqref{eq:ScarpiKernels}. This means that $\psi_{\alpha}(t)$ and $\phi_{\alpha}(t)$ form a {\em Sonine pair} and therefore one finds that
\begin{equation}\label{eq:VO_FundTheorCalculus}
	\DS^{\alpha(t)}_0 \bigl[ \IS^{\alpha(t)}_0 f(t) \bigr] = f(t) 
	\quad \mbox{and} \quad
	\IS^{\alpha(t)}_0 \bigl[ \DS^{\alpha(t)}_0 f(t) \bigr] = f(t) - f(0) \, ,
\end{equation}
and Scarpi's operators satisfy the fundamental theorem of fractional calculus.

\section{Variable-order transitions of exponential type}\label{S:Propr_TransFunction_Exp}

The general framework described in Section \ref{S:VOGeneral} applies to any 
function $\alpha(t)$ that satisfies the assumptions A1, A2 and A3. In this section we take a closer look at VO integrals and derivatives by selecting a specific class of functions for $\alpha(t)$. 
The reason for this choice consists in the fact that the investigation of qualitative properties of both operator kernels and  solutions of FDEs with a general VO function $\alpha(t)$ would represent a rather unfeasible task unless one fixes a specific function. Moreover, selecting a specific VO function $\alpha(t)$ significantly simplifies the development of computational procedures, as we shall discuss in a forthcoming section.

We focus therefore on the case of transition functions $\alpha(t)$, defied on $\mathbb{R}^+$, describing a smooth transition from the order $0<\alpha_1<1$, at $t=0$, to $0<\alpha_2<1$ as $t \to +\infty$. More precisely, we consider a family of exponential transitions parametrised by a {\em transition rate} $c>0$ and given by
\begin{equation}\label{eq:alpha_exp}
	\alpha(t) = \alpha_2 + (\alpha_1 - \alpha_2) \eu^{-ct} ,
\end{equation} 
which satisfies assumption  A1, A2, and A3 and whose LT reads

\begin{equation}\label{eq:A_Exp}
	A(s) =  \frac{\alpha_2 c + \alpha_1 s}{s(c+s)}
	, \quad
	\Re(s) > 0 \, . 
\end{equation}

The objective of this work is to investigate VO operators with 
$\alpha(t)$ as in \eqref{eq:alpha_exp} from a purely mathematical perspective. We observe, however, that $\alpha(t)$ can be viewed as a solution of a relaxation equation, which is ubiquitous in physics.

For the sake of convenience, and to emphasise the dependence on the asymptotic orders $\alpha_1$ and $\alpha_2$ in the exponential transitions, we adopt a slightly different  notation compared to the more general one presented in \eqref{eq:ScarpiKernels}. Specifically, we denote the LTs of the generalised VO kernels by $\Psi_{\alpha_1,\alpha_2}(s)$ and $\Phi_{\alpha_1,\alpha_2}(s)$, {\em i.e.},
\[
\Psi_{\alpha_1,\alpha_2}(s) = s^{-sA(s)} = s^{-\frac{\alpha_2 c + \alpha_1 s}{c+s}}
\quad \text{and} \quad
\Phi_{\alpha_1,\alpha_2}(s) = s^{sA(s)-1} = s^{-\frac{(1-\alpha_2) c + (1-\alpha_1) s}{c+s}} .
\]

Similarly, we denote the corresponding kernels (of $\IS^{\alpha(t)}_0$ and $\DS^{\alpha(t)}_0$, respectively) in the time domain by
\[
\psi_{\alpha_1,\alpha_2}(t) = {\mathcal L}^{-1} \Bigl(\Psi_{\alpha_1,\alpha_2}(s) \, ; \, s \Bigr)
\quad \text{and} \quad
\phi_{\alpha_1,\alpha_2}(t) =  {\mathcal L}^{-1} \Bigl(\Phi_{\alpha_1,\alpha_2}(s) \, ; \, s \Bigr) \, .
\]

For the sake of a lighter notation we omit to mention explicitly the dependence of the kernels on the transition rate $c$.

Furthermore, we will refer to the usual kernels of the fractional integral and derivative of constant order $\alpha_1$ by
\[
\psi_{\alpha_1}(t) = \frac{t^{\alpha_1-1}}{\Gamma(\alpha_1)}
\quad \text{and} \quad
\phi_{\alpha_1}(t) = \frac{t^{-\alpha_1}}{\Gamma(1-\alpha_1)}	
\]
with 
\[
\Psi_{\alpha_1}(s) = {\mathcal L} \Bigl(\psi_{\alpha_1}(t) \, ; \, s \Bigr) = s^{-\alpha_1}
\quad \text{and} \quad
\Phi_{\alpha_1}(s) = {\mathcal L} \Bigl(\phi_{\alpha_1}(t) \, ; \, s \Bigr) = s^{\alpha_1-1}
\]
denoting the corresponding LTs. Similar expressions hold for the constant order $\alpha_2$.

The convergence region of the LT $A(s)$, as well as that of $\Psi_{\alpha_1}(s)$, $\Psi_{\alpha_2}(s)$, $\Phi_{\alpha_1}(s)$, and  $\Phi_{\alpha_2}(s)$, 
includes the whole positive complex half-plane. Moreover, $A(s)$ has two poles at $s=0$ and $s=-c$. Additionally, in order to make $\Psi_{\alpha_1,\alpha_2}(s)$ and $\Phi_{\alpha_1,\alpha_2}(s)$ single-valued one has to choose a branch-cut, that we set on the negative real axis.

Analytical expressions for $\psi_{\alpha_1,\alpha_2}(t)$ and $\phi_{\alpha_1,\alpha_2}(t)$ are not known. Nonetheless, a lot can be learned about these functions from their LTs. For this reason we shall first briefly review some basic properties of the LT. 

\subsection{Basic properties of the Laplace transform}

Let $f(t)$ be a piece-wise continuous function of exponential order $a$, {\em i.e.}, there exists an $M>0$ such that for some $t_0\ge0$ it is $|f(t)|\le M \eu^{at}$ for all $t\ge t_0$. Then, for any $s\in \Cset$, with $\Re(s)>a$ the LT 
\[
F(s) = {\mathcal L}\Bigl(f(t)\, , \, s \Bigr) = \int_0^{\infty} \eu^{-s t} f(t) \du t
\]
exists ({\em i.e.}, the above integral converges). Moreover, one also finds that
\[
\lim_{\Re(s) \to \infty} F(s) = 0 \, .
\]

The following standard theorems will play an important role in the derivation of the main results presented in the following. 

\begin{theorem}[Initial value (IV) theorem for the LT (see \cite{LePage1980})]\label{thm:LT_IVT}
	Let $f$ be continuous for $t>0$, of exponential order and with $f'(t)$  possessing at worst an integrable singularity at $t = 0$. Then 
	\[
	\lim_{t\to 0^+} f(t) = \lim_{s \to \infty} s F(s) .
	\]
	if the two limits exist.
\end{theorem}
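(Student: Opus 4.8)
The plan is to reduce the statement to the elementary relation between $F(s)$ and the Laplace transform of $f'$, and then to exploit the decay of the Laplace transform at infinity recalled just above. Set $L \coloneqq \lim_{t\to 0^+} f(t)$, which we are assuming to exist. Since $f'$ has at worst an integrable singularity at the origin, $\int_0^t f'(\tau)\,\du\tau$ converges for every $t>0$; letting $\varepsilon\to 0^+$ in the identity $f(t) = f(\varepsilon) + \int_\varepsilon^t f'(\tau)\,\du\tau$ (the fundamental theorem of calculus on $(0,t]$) then yields the representation $f(t) = L + \int_0^t f'(\tau)\,\du\tau$ valid for all $t>0$.

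First I would insert this representation into $sF(s)$. For $\Re(s)$ large enough one may write $sF(s) = L + s\int_0^\infty \eu^{-st}\Bigl(\int_0^t f'(\tau)\,\du\tau\Bigr)\du t$, and an application of Fubini's theorem --- legitimate because $f'$ is integrable near $0$, is of exponential order away from $0$, and $\eu^{-st}$ supplies the required decay --- gives $s\int_0^\infty \eu^{-st}\int_0^t f'(\tau)\,\du\tau\,\du t = \int_0^\infty f'(\tau)\,\eu^{-s\tau}\,\du\tau = {\mathcal L}\bigl(f'(t)\,;\,s\bigr)$. Hence $sF(s) = L + {\mathcal L}\bigl(f'(t)\,;\,s\bigr)$, and it remains only to show that ${\mathcal L}\bigl(f'(t)\,;\,s\bigr)\to 0$ as $s\to\infty$.

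For this last step I would split the integral $\int_0^\infty \eu^{-s\tau} f'(\tau)\,\du\tau$ at a point $t_0>0$ past which $f'$ is of exponential order. The contribution of $[t_0,\infty)$ is the Laplace transform of a function of exponential order and therefore tends to $0$ as $\Re(s)\to\infty$, by the property recalled before the theorem. On $[0,t_0]$ the integrand is bounded in modulus by $|f'(\tau)|$, which is integrable by hypothesis, and tends to $0$ pointwise for each fixed $\tau>0$ as $s\to\infty$; dominated convergence then shows that this piece vanishes in the limit as well. Combining the two, ${\mathcal L}\bigl(f'(t)\,;\,s\bigr)\to 0$, whence $\lim_{s\to\infty} sF(s) = L = \lim_{t\to 0^+} f(t)$.

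The one genuinely delicate point is the integrable singularity of $f'$ at $t=0$: one cannot simply invoke the textbook differentiation rule ${\mathcal L}(f';s) = sF(s) - f(0^+)$, so both the Fubini interchange and the dominated-convergence estimate near the origin must be carried out with the weaker integrability hypothesis kept explicitly in view. The remaining manipulations are routine.
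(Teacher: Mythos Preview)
The paper does not prove this theorem; it is stated as a classical result with a citation to LePage (1980), so there is no ``paper's own proof'' to compare against. Your argument is the standard one and is essentially correct: reduce to the differentiation identity $sF(s)=f(0^+)+{\mathcal L}(f';s)$ and then show that ${\mathcal L}(f';s)\to 0$ as $s\to\infty$.

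One small point worth tightening: you assert that $f'$ is ``of exponential order away from $0$'', and you use this to handle the tail $[t_0,\infty)$. That is not part of the stated hypotheses --- $f$ being of exponential order does not by itself force $f'$ to be (think of $f(t)=\sin(\eu^{t^2})$). In LePage's formulation the needed regularity is built in, and in practice the result is only applied to functions for which ${\mathcal L}(f';s)$ exists; but if you want the proof to stand on exactly the hypotheses written here, you should either add that assumption explicitly or argue directly that $\int_{t_0}^\infty \eu^{-s\tau}f'(\tau)\,\du\tau\to 0$ via an integration by parts that throws the burden back onto $f$ itself. Apart from this, the Fubini step and the dominated-convergence treatment of the singular piece near the origin are handled correctly.
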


\begin{theorem}[Final value (FV) theorem for the LT (see \cite{Diethelm2010,Grove1991})]\label{thm:LT_FVT}
	Let $f(t): [0,T] \to \Rset$ and assume that its LT $F(s)$ does not have any singularities in the right half-plane $\mathcal{H}_{\rm c}:=\bigl\{s \in \Cset \, | \, \Re(s) \ge 0\bigr\}$, except for possibly a simple pole at the origin. Then,
	\[
	\lim_{s \to 0^{+}} s F(s) = \lim_{t\to \infty} f(t) 
	\]
	if the two limits exist.
\end{theorem}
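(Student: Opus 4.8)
The plan is to reduce the statement to a function that vanishes at infinity and then to control the Laplace integral directly by an $\varepsilon$--splitting argument. Write $L := \lim_{t\to\infty} f(t)$, which exists by assumption, and set $h(t) := f(t) - L$, so that $h(t) \to 0$ as $t\to\infty$. By linearity of the LT its transform is $H(s) = F(s) - L/s$ (both terms being defined for $\Re(s)>0$), hence $s F(s) = s H(s) + L$ there, and the assertion $\lim_{s\to 0^+} sF(s) = L$ becomes equivalent to $\lim_{s\to 0^+} s H(s) = 0$.

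To establish this last limit, fix $\varepsilon>0$ and choose $T_\varepsilon>0$ with $|h(t)|\le\varepsilon$ for all $t\ge T_\varepsilon$. Since $f$ is locally integrable (a prerequisite for the existence of $F$), $h$ is integrable on $[0,T_\varepsilon]$; put $C_\varepsilon := \int_0^{T_\varepsilon}|h(t)|\du t<\infty$. Then, for $s>0$,
\[
|sH(s)| \le s\int_0^{T_\varepsilon}\eu^{-st}|h(t)|\du t + s\int_{T_\varepsilon}^{\infty}\eu^{-st}|h(t)|\du t \le s\,C_\varepsilon + \varepsilon\int_{T_\varepsilon}^{\infty} s\,\eu^{-st}\du t \le s\,C_\varepsilon + \varepsilon ,
\]
where I used $\eu^{-st}\le 1$ on $[0,T_\varepsilon]$ and $\int_{T_\varepsilon}^{\infty}s\,\eu^{-st}\du t = \eu^{-sT_\varepsilon}\le 1$. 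Letting $s\to 0^+$ gives $\limsup_{s\to 0^+}|sH(s)|\le\varepsilon$, and since $\varepsilon$ was arbitrary, $\lim_{s\to 0^+}sH(s)=0$; combined with $sF(s)=sH(s)+L$ this yields $\lim_{s\to 0^+}sF(s)=L$.

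The role of the hypothesis that $F$ has no singularities in $\mathcal{H}_{\rm c}$ beyond a possible simple pole at the origin is to guarantee that $sF(s)$ is analytic at $s=0$; then the one-sided limit $\lim_{s\to 0^+}sF(s)$ coincides with the full, path-independent value of $sF(s)$ at the origin — i.e. with the residue of $F$ there — which is the form in which the result is actually invoked later when extracting the asymptotics of the VO kernels from their Laplace transforms. I expect the only genuinely delicate step to be the passage $s\to 0^+$ under the integral sign, i.e. the displayed estimate together with the implicit requirement that $f$ be locally integrable near the origin; everything else is bookkeeping. (Alternatively, when $f$ is absolutely continuous one may start from $sF(s)-f(0)=\mathcal L(f';s)$ and pass to the limit via the fundamental theorem of calculus and dominated convergence, but the $\varepsilon$--splitting route above needs no smoothness assumption on $f$.)
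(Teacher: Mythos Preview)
The paper does not prove this statement at all: Theorem~\ref{thm:LT_FVT} is quoted as a textbook fact with references to \cite{Diethelm2010,Grove1991}, so there is no in-paper proof to compare against. Your argument is correct and is essentially the standard textbook proof: subtract the limit to reduce to a function vanishing at infinity, then split the Laplace integral at a cutoff $T_\varepsilon$ chosen so that the tail contributes at most $\varepsilon$ while the finite part is killed by the prefactor $s$. The remark in your last paragraph about the analyticity hypothesis is also to the point; in the paper the theorem is only ever used along $s\to 0^+$ on the positive real axis (Propositions~\ref{prop:KernelIntegrRelat} and~\ref{prop:RelaxEq_Limit}), so your direct estimate already delivers exactly what is needed there.
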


For a more detailed discussion of these results and of the Laplace transform method, in general, we refer the interested reader to \cite{Rasof1962}.

\subsection{Connecting VO and constant-order kernels}

In \cite{GarrappaGiustiMainardi2021}, based on some numerical experiments, it was observed that $\psi_{\alpha_1,\alpha_2}(t)$ and $\phi_{\alpha_1,\alpha_2}(t)$ tend to approach, asymptotically in $t$, the behaviour of constant-order kernels. This intuition is further supported by the following analytical results.
\begin{proposition}\label{prop:KernelIntegrRelat}
	Let $0<\alpha_1,\alpha_2 <1$ and $c>0$. Then
	\[
	\lim_{t\to0^+} \psi_{\alpha_1,\alpha_2}(t) = \lim_{t\to0^+} \psi_{\alpha_1}(t) \quad \mbox{and} 
	\quad
	\lim_{t\to\infty}
	\psi_{\alpha_1,\alpha_2}(t)
	=
	\lim_{t\to\infty}
	\psi_{\alpha_2}(t)
	\, .
	\]
\end{proposition}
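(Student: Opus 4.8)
The plan is to push the whole question into the Laplace domain and then read off the two limits from the initial‑value and final‑value theorems (Theorems~\ref{thm:LT_IVT} and~\ref{thm:LT_FVT}). The algebraic starting point is the decomposition of $sA(s)$ obtained from \eqref{eq:A_Exp},
\[
sA(s) = \frac{\alpha_1 s + \alpha_2 c}{s+c} = \alpha_1 + \frac{(\alpha_2-\alpha_1)c}{s+c} = \alpha_2 + \frac{(\alpha_1-\alpha_2)s}{s+c},
\]
which, recalling $\Psi_{\alpha_j}(s)=s^{-\alpha_j}$, yields the two factorisations
\[
\Psi_{\alpha_1,\alpha_2}(s) = \Psi_{\alpha_1}(s)\, s^{-(\alpha_2-\alpha_1)c/(s+c)} = \Psi_{\alpha_2}(s)\, s^{(\alpha_2-\alpha_1)s/(s+c)} .
\]
Both correction factors have the form $\exp\bigl(\varepsilon(s)\ln s\bigr)$ with $\varepsilon(s)\to 0$ — as $s\to\infty$ for the first (since $\ln s/(s+c)\to 0$) and as $s\to 0^+$ for the second (since $s\ln s/(s+c)\to 0$) — hence each tends to $1$ in the relevant regime, so $\Psi_{\alpha_1,\alpha_2}(s)$ matches $\Psi_{\alpha_1}(s)$ for large $s$ and $\Psi_{\alpha_2}(s)$ for small $s$.

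For $t\to 0^+$ I would apply Theorem~\ref{thm:LT_IVT} to the regularised function $\psi_{\alpha_1,\alpha_2}-\psi_{\alpha_1}$, whose transform is $\Psi_{\alpha_1}(s)\bigl(s^{-(\alpha_2-\alpha_1)c/(s+c)}-1\bigr)$. Since $s^{-(\alpha_2-\alpha_1)c/(s+c)}-1=O(s^{-1}\ln s)$ as $s\to\infty$, one gets $s\,\Psi_{\alpha_1}(s)\bigl(s^{-(\alpha_2-\alpha_1)c/(s+c)}-1\bigr)=O(s^{-\alpha_1}\ln s)\to 0$, so the theorem gives $\psi_{\alpha_1,\alpha_2}(t)-\psi_{\alpha_1}(t)\to 0$ as $t\to 0^+$; since $\psi_{\alpha_1}(t)=t^{\alpha_1-1}/\Gamma(\alpha_1)\to+\infty$ (because $\alpha_1<1$), the same holds for $\psi_{\alpha_1,\alpha_2}$, i.e.\ the two limits coincide. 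Working with the difference, rather than with $\psi_{\alpha_1,\alpha_2}$ itself, is what makes the initial‑value theorem applicable: $\psi_{\alpha_1,\alpha_2}'$ behaves like $t^{\alpha_1-2}$ at the origin and so is not integrable there, whereas the derivative of the difference is.

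For $t\to\infty$ I would argue symmetrically with Theorem~\ref{thm:LT_FVT}: from the second factorisation, $s\,\Psi_{\alpha_1,\alpha_2}(s)=s^{1-\alpha_2}\,s^{(\alpha_2-\alpha_1)s/(s+c)}\to 0$ as $s\to 0^+$ (as $1-\alpha_2>0$ and the last factor tends to $1$), equivalently $s\bigl(\Psi_{\alpha_1,\alpha_2}(s)-\Psi_{\alpha_2}(s)\bigr)=O(s^{2-\alpha_2}\ln s)\to 0$; hence $\psi_{\alpha_1,\alpha_2}(t)\to 0$ as $t\to\infty$, which is exactly $\lim_{t\to\infty}\psi_{\alpha_2}(t)=\lim_{t\to\infty}t^{\alpha_2-1}/\Gamma(\alpha_2)=0$. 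So again the two limits agree.

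The routine algebra is harmless; the actual content is the verification of the hypotheses of the two theorems, and I expect this to be the main obstacle. One must note that $\Psi_{\alpha_1,\alpha_2}$ is analytic for $\Re(s)>0$ (the only pole of $sA(s)$ sits at $s=-c$ and the branch cut of $s\mapsto s^{w}$ is on the negative real axis) and vanishes as $\Re(s)\to\infty$, which secures continuity of $\psi_{\alpha_1,\alpha_2}$ on $(0,\infty)$ and sub‑exponential growth; the non‑integrable behaviour of $\psi_{\alpha_1,\alpha_2}'$ at $t=0$ is removed by the subtraction above; and in the final‑value step one has to contend with the fact that at $s=0$ the transform carries a branch‑point singularity of type $s^{-\alpha_2}$ rather than a simple pole — this needs the mild strengthening of Theorem~\ref{thm:LT_FVT} in which a vanishing (integrable) branch point is allowed in place of a simple pole, so that it suffices that $s\Psi_{\alpha_1,\alpha_2}(s)$ (or the $\Psi_{\alpha_2}$‑subtracted transform) extend continuously to $s=0$ with value $0$. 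Finally, to use the ``if the two limits exist'' clause one needs a priori existence of the time‑domain limits, which can be read off from the monotonicity properties of the Scarpi kernels recorded in \cite{GarrappaGiustiMainardi2021}.
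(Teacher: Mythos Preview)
Your approach is essentially the paper's: the same algebraic decomposition $sA(s)=\alpha_1+B(s)=\alpha_2+C(s)$ and the same appeal to the initial- and final-value theorems (Theorems~\ref{thm:LT_IVT} and~\ref{thm:LT_FVT}). The only difference is that the paper applies the theorems directly to $\psi_{\alpha_1,\alpha_2}$ and $\psi_{\alpha_j}$ separately (so that both $s$-domain limits are $+\infty$ in the initial-value step), whereas you subtract first and apply the theorems to the difference---your variant is more scrupulous about the regularity hypotheses, but the idea is identical.
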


\begin{proof}
	We begin by observing that
	\[
	s A(s) = \frac{\alpha_2 c + \alpha_1 s}{c+s} = \alpha_1 + B(s)
	, \quad B(s) = (\alpha_2 - \alpha_1)  \frac{c}{c+s} \, .
	\]
	Since $\Psi_{\alpha_1,\alpha_2}(s) = s^{-\alpha_1 - B(s)}$ and $\Psi_{\alpha_1}(s) = s^{-\alpha_1}$ do not have singularities in the complex half-plane $\mathcal{H}_{\rm +}:=\bigl\{s \in \Cset \, | \, \Re(s) > 0\bigr\}$, we can observe that
	$$
	\lim_{s \to \infty} \Psi_{\alpha_1,\alpha_2}(s) = \lim_{s \to \infty} s^{-B(s)} \, \Psi_{\alpha_1}(s) = \lim_{s \to \infty} \Psi_{\alpha_1}(s) \, .
	$$
	Thus,
	$$
	\lim_{s \to \infty} s \, \Psi_{\alpha_1,\alpha_2}(s) = \lim_{s \to \infty} s ( s^{-B(s)} \, \Psi_{\alpha_1}(s)) = \lim_{s \to \infty} s \, \Psi_{\alpha_1}(s) \, ,
	$$
	which implies, in light of Theorem \ref{thm:LT_IVT}, that
	$$
	\lim_{t\to0^+} \psi_{\alpha_1,\alpha_2}(t) = \lim_{t\to0^+} \psi_{\alpha_1}(t) \, .
	$$
	
	Similarly, observing that
	\[
	s A(s) 
	= \frac{\alpha_2 c + \alpha_1 s}{c+s} 
	= \alpha_2 + C(s) 
	, \quad 
	C(s) = (\alpha_1 - \alpha_2) \frac{s}{c+s} \, ,
	\]
	we can conclude that
	$$
	\lim_{s \to 0} s \, \Psi_{\alpha_1,\alpha_2}(s) = \lim_{s \to 0} s ( s^{-C(s)} \, \Psi_{\alpha_1}(s)) = \lim_{s \to 0} s \, \Psi_{\alpha_1}(s) \, ,
	$$
	and therefore, in light of Theorem \ref{thm:LT_FVT}, one has
	$$
	\lim_{t\to\infty} \psi_{\alpha_1,\alpha_2}(t) = \lim_{t\to\infty} \psi_{\alpha_1}(t) \, ,
	$$
	which concludes the proof. $\qed$
\end{proof}

A similar result can also be proven for the kernel $\phi_{\alpha_1,\alpha_2}(t)$ of the VO fractional derivative.

\begin{proposition}
	Let $0<\alpha_1,\alpha_2 <1$ and $c>0$. Then
	\[
	\lim_{t\to0^+} \phi_{\alpha_1,\alpha_2}(t) = \lim_{t\to0^+} \phi_{\alpha_1}(t) \quad \mbox{and} 
	\quad
	\lim_{t\to\infty}
	\phi_{\alpha_1,\alpha_2}(t)
	=
	\lim_{t\to\infty}
	\phi_{\alpha_2}(t)
	\, .
	\]
\end{proposition}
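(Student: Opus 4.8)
The plan is to mimic exactly the strategy used in the proof of Proposition~\ref{prop:KernelIntegrRelat}, but applied to $\Phi_{\alpha_1,\alpha_2}(s)$ instead of $\Psi_{\alpha_1,\alpha_2}(s)$. First I would record the two algebraic decompositions of the exponent $sA(s)-1$ that appears in $\Phi_{\alpha_1,\alpha_2}(s)=s^{sA(s)-1}$. Using $sA(s)=\alpha_1+B(s)$ with $B(s)=(\alpha_2-\alpha_1)\,c/(c+s)$, we get $sA(s)-1=(\alpha_1-1)+B(s)$, so that $\Phi_{\alpha_1,\alpha_2}(s)=s^{B(s)}\,\Phi_{\alpha_1}(s)$; and using $sA(s)=\alpha_2+C(s)$ with $C(s)=(\alpha_1-\alpha_2)\,s/(c+s)$, we get $sA(s)-1=(\alpha_2-1)+C(s)$, so that $\Phi_{\alpha_1,\alpha_2}(s)=s^{C(s)}\,\Phi_{\alpha_2}(s)$.

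Next, for the $t\to 0^+$ statement I would look at the limit $s\to\infty$. Since $B(s)\to 0$ as $s\to\infty$, we have $s^{B(s)}\to 1$, and neither $\Phi_{\alpha_1,\alpha_2}(s)$ nor $\Phi_{\alpha_1}(s)$ has singularities in the right half-plane $\mathcal{H}_{+}$, so $\lim_{s\to\infty} s\,\Phi_{\alpha_1,\alpha_2}(s)=\lim_{s\to\infty}s\,s^{B(s)}\Phi_{\alpha_1}(s)=\lim_{s\to\infty}s\,\Phi_{\alpha_1}(s)$. Applying the initial value theorem (Theorem~\ref{thm:LT_IVT}) to both $\phi_{\alpha_1,\alpha_2}$ and $\phi_{\alpha_1}$ then yields $\lim_{t\to0^+}\phi_{\alpha_1,\alpha_2}(t)=\lim_{t\to0^+}\phi_{\alpha_1}(t)$. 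Symmetrically, for the $t\to\infty$ statement I would look at $s\to 0$: here $C(s)\to 0$, so $s^{C(s)}\to 1$, and $\Phi_{\alpha_1,\alpha_2}(s)$ and $\Phi_{\alpha_2}(s)$ have no singularities in $\mathcal{H}_{\rm c}$ apart from (at most) the origin, so $\lim_{s\to0^+}s\,\Phi_{\alpha_1,\alpha_2}(s)=\lim_{s\to0^+}s\,\Phi_{\alpha_2}(s)$; the final value theorem (Theorem~\ref{thm:LT_FVT}) then gives $\lim_{t\to\infty}\phi_{\alpha_1,\alpha_2}(t)=\lim_{t\to\infty}\phi_{\alpha_2}(t)$.

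The one point that needs slightly more care than in the companion proposition is the verification that the hypotheses of the initial/final value theorems genuinely hold for the kernels $\phi$: in contrast to $\psi_{\alpha_1}(t)=t^{\alpha_1-1}/\Gamma(\alpha_1)$, the constant-order kernel $\phi_{\alpha_1}(t)=t^{-\alpha_1}/\Gamma(1-\alpha_1)$ diverges as $t\to0^+$, so the relevant conclusion is really that both sides diverge at the same rate (indeed $s\,\Phi_{\alpha_1}(s)=s^{\alpha_1}\to\infty$), and one should note the theorem is applied in the form "both limits are $+\infty$," consistently with how $s^{-B(s)}$ was handled for $\psi$ in the previous proof. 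One should also check that the limit $\lim_{s\to\infty}s^{B(s)}=1$ is legitimate along the relevant path — this follows since $B(s)\log s\to 0$ because $B(s)=O(1/s)$ decays faster than $\log s$ grows, and the analogous estimate $C(s)\log s\to 0$ as $s\to0$ holds because $C(s)=O(s)$ while $\log s$ grows only logarithmically. Apart from these bookkeeping remarks, the argument is a direct transcription of the previous proof, so I would simply write: "The proof is analogous to that of Proposition~\ref{prop:KernelIntegrRelat}, with the roles of $B(s)$ and $C(s)$ as exponents of $s$ multiplying $\Phi_{\alpha_1}(s)$ and $\Phi_{\alpha_2}(s)$ respectively."
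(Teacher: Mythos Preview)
Your proposal is correct and follows exactly the same approach as the paper, which simply states ``Analogous to the proof of Proposition~\ref{prop:KernelIntegrRelat}.'' One small correction to your commentary: there is no real ``contrast'' with the $\psi$ case, since $\psi_{\alpha_1}(t)=t^{\alpha_1-1}/\Gamma(\alpha_1)$ also diverges as $t\to 0^+$ for $0<\alpha_1<1$, so the initial value theorem is already being applied in the ``both limits are $+\infty$'' sense in Proposition~\ref{prop:KernelIntegrRelat} too.
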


\begin{proof}
	Analogous to the proof of Proposition \ref{prop:KernelIntegrRelat}. $\qed$
\end{proof}

The main message of the propositions above is that VO operators realise a ``transition'' between constant-order operators.
To further highlight this point in Figure \ref{fig:Fig_KernelComp} we show, by means of numerical experiments, that the ratios
$\psi_{\alpha_1,\alpha_2}(t)/\psi_{\alpha_1} (t)$ and $\phi_{\alpha_1,\alpha_2}(t)/\phi_{\alpha_1} (t)$ settle to one at short times. The same occurs for $\psi_{\alpha_1,\alpha_2}(t)/\psi_{\alpha_2} (t)$ and $\phi_{\alpha_1,\alpha_2}(t)/\phi_{\alpha_2} (t)$ at late times. We remark that the kernels $\psi_{\alpha_1,\alpha_2}(t)$ and $\phi_{\alpha_1,\alpha_2}(t)$ have been numerically evaluated by inverting their LTs by means of the algorithm described in \cite{WeidemanTrefethen2007}.

\begin{figure}[ht]
	\begin{tabular}{cc}
		\includegraphics[width=0.44\textwidth]{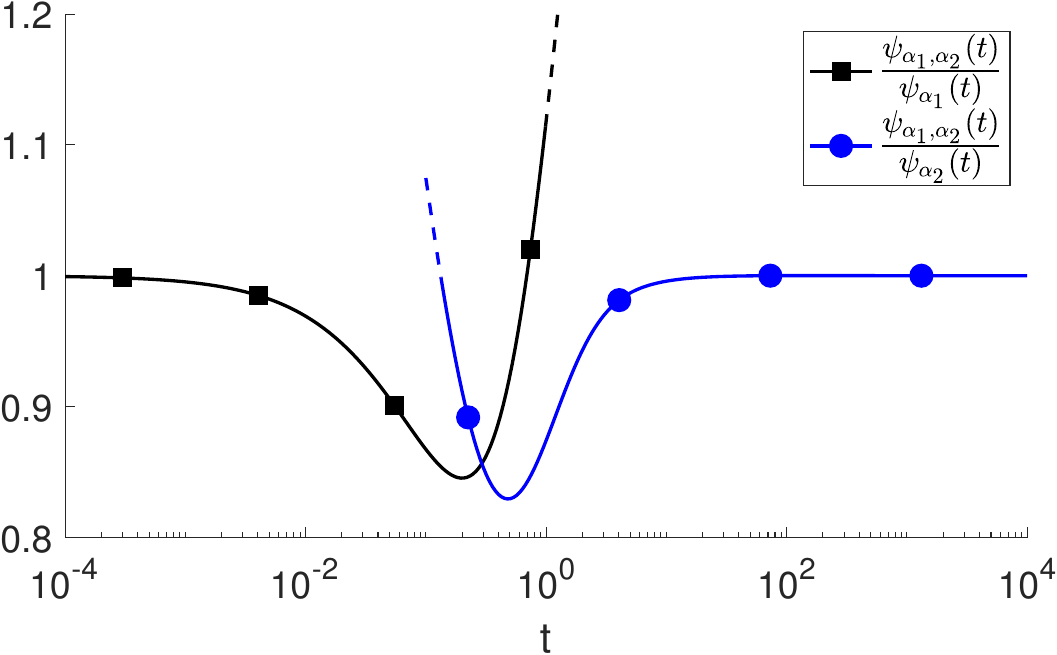} &
		\includegraphics[width=0.44\textwidth]{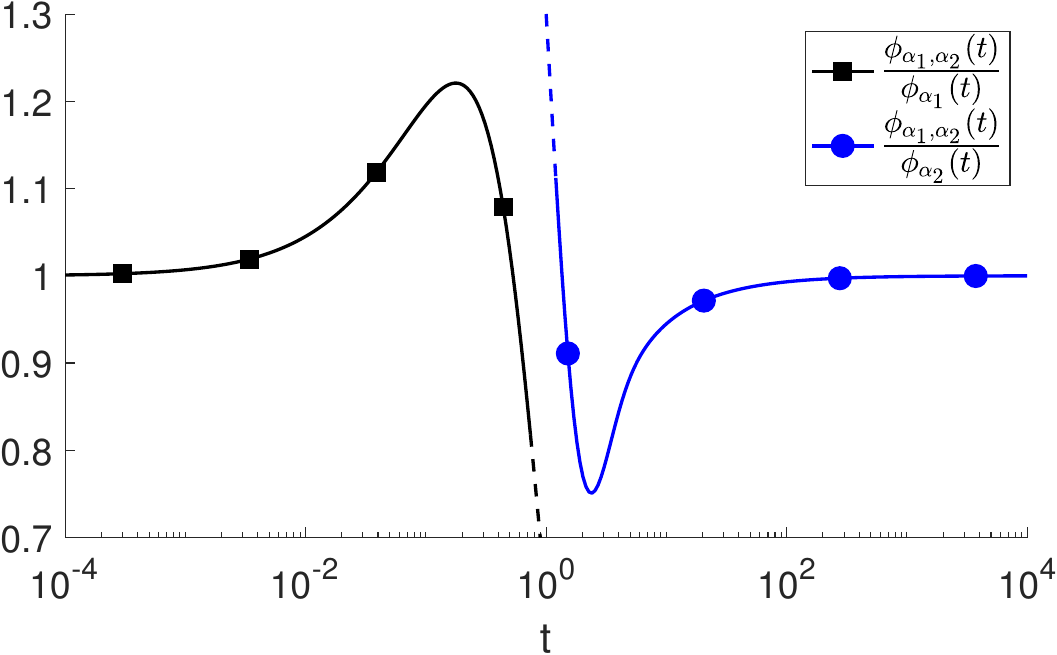} \\
	\end{tabular}
	\caption{Ratios between kernels of VO and CO integrals (right plot) and  derivatives (left plot) for small and large times; here $\alpha_1=0.6$, $\alpha_2=0.8$ and $c=2.0$.}\label{fig:Fig_KernelComp}
\end{figure}

\subsection{Relaxation equation}\label{SS:RelaxEquation}

Let us now consider the fractional relaxation equation \begin{equation}\label{eq:S_Relaxation}
	\left\{ \begin{array}{l}
		\DS^{\alpha(t)}_0 y(t) = - \lambda y(t) \\
		y(0) = y_0 \\
	\end{array} \right. ,
\end{equation}
with $ \DS^{\alpha(t)}_0$ the VO fractional derivative realizing the exponential order transition \eqref{eq:alpha_exp}, $\lambda > 0$ a real parameter and $y_0$ any positive initial value. 

In the Laplace domain Eq. \eqref{eq:S_Relaxation} can be formulated as  
$$s^{sA(s)-1} \bigl[ sY(s) - y_0 \bigr] = - \lambda Y(s) \, ,$$ 
with $Y(s)$ the LT of the solution $y(t)$. Isolating $Y(s)$ yields
\begin{equation}\label{eq:LT_Relaxation}
	Y(s) = H(s) \, y_0 \, , \quad \mbox{with} 
	\quad H(s) \coloneqq \frac{1}{s} \left( 1 + \lambda s^{-sA(s)}\right)^{-1} .
\end{equation}

The exact solutions of the corresponding relaxation equations with standard Caputo fractional derivative \eqref{eq:DCaputo} of constant order $0<\alpha_1<1$ and $0<\alpha_2<1$ are known to be respectively
\[
y_{\alpha_1}(t) = E_{\alpha_1}(-t^{\alpha_1}\lambda)y_0
\quad \text{and} \quad y_{\alpha_2}(t) = E_{\alpha_2}(-t^{\alpha_2}\lambda)y_0, 
\]
where $E_{\beta}(z)$ denotes the Mittag-Leffler function, {\em i.e.},
\[
E_{\beta}(z) = \sum_{k=0}^{\infty} \frac{z^{k}}{\Gamma(\beta k + 1)} \, .
\]

Finding a general result concerning exact solution of the VO-FDE \eqref{eq:S_Relaxation} appears to be a rather hard task. Nonetheless, we can still point out a relationship between a solution of \eqref{eq:S_Relaxation} and
the one of the corresponding CO counterparts at early and late times.

\begin{proposition}\label{prop:RelaxEq_Limit}
	Let $0<\alpha_1<1$, $0<\alpha_2<1$, $c>0$ and $\lambda >0$. There exist a function $f(t)$ such that the solution $y(t)$ of the relaxation equation \eqref{eq:S_Relaxation} satisfies 
	\[
	y(t) = E_{\alpha_1}(-t^{\alpha_1} \lambda) y_0 + f(t) y_0, \quad \lim_{t \to 0^+} f(t) = 0.
	\]
	
	Moreover, if $H(s)$  does not have singularities on $\mathcal{H}_{\rm c}$, there exists a function $g(t)$ such that
	\[
	y(t) = E_{\alpha_2}(-t^{\alpha_2} \lambda) y_0 + g(t) y_0, \quad \lim_{t \to \infty} g(t) = 0 .
	\]
\end{proposition}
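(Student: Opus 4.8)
The plan is to work in the Laplace domain, comparing the transfer function $H(s)$ of \eqref{eq:LT_Relaxation} with the transfer functions $H_{\alpha_j}(s) := \frac{1}{s}\bigl(1 + \lambda s^{-\alpha_j}\bigr)^{-1}$ of the constant-order relaxation equations --- recall that $H_{\alpha_j}(s)$ is exactly the Laplace transform of $E_{\alpha_j}(-t^{\alpha_j}\lambda)$ --- and then to push the resulting asymptotic estimates back to the time domain via the initial- and final-value theorems (Theorems \ref{thm:LT_IVT} and \ref{thm:LT_FVT}).

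For the first assertion I would set $f(t) := y(t)/y_0 - E_{\alpha_1}(-t^{\alpha_1}\lambda)$, so that $F(s) := \mathcal{L}(f;s) = H(s) - H_{\alpha_1}(s)$ and the claim $\lim_{t\to0^+}f(t)=0$ is exactly what must be shown. Using the splitting $sA(s) = \alpha_1 + B(s)$ with $B(s)=(\alpha_2-\alpha_1)\,c/(c+s)$ from the proof of Proposition \ref{prop:KernelIntegrRelat}, one gets
\[
s\,F(s) = \frac{1}{1+\lambda s^{-\alpha_1}s^{-B(s)}} - \frac{1}{1+\lambda s^{-\alpha_1}},
\]
and since $B(s)\to0$ --- more precisely $B(s)\log s\to0$ --- as $s\to\infty$, both denominators tend to $1$ and hence $s\,F(s)\to0$. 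Theorem \ref{thm:LT_IVT} then yields $\lim_{t\to0^+}f(t)=0$, once one has checked that $f$ inherits the regularity required by the IV theorem from $y$ and $E_{\alpha_1}$ and that the one-sided limit at the origin exists.

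For the second assertion I would set $g(t) := y(t)/y_0 - E_{\alpha_2}(-t^{\alpha_2}\lambda)$, with $G(s):=\mathcal{L}(g;s)=H(s)-H_{\alpha_2}(s)$, and use the complementary splitting $sA(s) = \alpha_2 + C(s)$, $C(s)=(\alpha_1-\alpha_2)\,s/(c+s)$, to write $s\,G(s)$ in the analogous form. Now $C(s)\log s\to0$ as $s\to0^+$ while $s^{-\alpha_2}\to+\infty$, so both fractions vanish and $s\,G(s)\to0$. To legitimately invoke the final-value theorem I must check that $G(s)$ has no singularities on $\mathcal{H}_{\rm c}$ beyond at most a simple pole at the origin: $H_{\alpha_2}(s)=s^{\alpha_2-1}/(s^{\alpha_2}+\lambda)$ has no poles there since $s^{\alpha_2}+\lambda\neq0$ for $\Re(s)\ge0$, the branch-point contributions of $H$ and $H_{\alpha_2}$ at $s=0$ cancel to leading order (so $G$ is in fact bounded near the origin), and the hypothesis on $H$ removes any pole in the closed right half-plane; Theorem \ref{thm:LT_FVT} then gives $\lim_{t\to\infty}g(t)=0$.

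The Laplace-domain limit computations are routine; the delicate point is the passage back to the time domain. For the first part this amounts to verifying the hypotheses of the IV theorem for $f$ (enough regularity of the solution near $t=0$, and existence of the one-sided limit), or equivalently applying the IV theorem directly to $y$ to get $\lim_{t\to0^+}y(t)=y_0$ and then subtracting; for the second part the real work is the singularity analysis of $H$ on $\mathcal{H}_{\rm c}$, the no-singularity hypothesis on $H$ being precisely what guarantees that $y(t)$ possesses a finite limit at infinity and thus makes the FV theorem applicable.
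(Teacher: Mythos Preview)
Your proof is correct and follows the same overall strategy as the paper: define the differences $f$ and $g$, take their Laplace transforms $H(s)-H_{\alpha_j}(s)$, show that $s$ times each difference tends to zero at the relevant endpoint, and invoke the initial- and final-value theorems.

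The one notable difference is in how the limits $sF(s)\to0$ and $sG(s)\to0$ are established. The paper first expands both $H(s)$ and $H_{\alpha_j}(s)$ into geometric series---in powers of $\lambda s^{-sA(s)}$ and $\lambda s^{-\alpha_j}$ for large $|s|$, and in their reciprocals for small $|s|$---and then argues term by term, appealing to uniform convergence to justify the interchange of limit and summation. Your direct computation on the closed forms
\[
sF(s)=\frac{1}{1+\lambda s^{-\alpha_1}s^{-B(s)}}-\frac{1}{1+\lambda s^{-\alpha_1}}
\]
(and analogously for $sG(s)$) is cleaner: it bypasses the series manipulations entirely. Your singularity discussion for the FV theorem---checking that $G(s)$, not merely $H(s)$, is regular on $\mathcal{H}_{\rm c}$ and noting the cancellation at the origin---is also slightly more thorough than the paper's. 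Neither argument fully verifies the regularity hypotheses of Theorem~\ref{thm:LT_IVT} for $f$, so on that point you are no more exposed than the original.
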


\begin{proof}
	{Recall that
		$$
		H(s) = \frac{1}{s} \left( 1 + \lambda s^{-sA(s)}\right)^{-1} = \frac{s^{sA(s) -1}}{s^{sA(s)} + \lambda}
		$$
		and
		$$
		{\mathcal E}_{\alpha_1}(s;\lambda) := \mathcal{L} \left(E_{\alpha_1}(-t^{\alpha_1}\lambda) \, ; \, s\right) = \frac{s^{\alpha_1-1}}{s^{\alpha_1} + \lambda} \, .
		$$
		As in the proof of Proposition \ref{prop:KernelIntegrRelat} we write 
		$$
		s A(s) =\alpha_1 + B(s) \quad
		\mbox{and} \quad
		B(s) = (\alpha_2 - \alpha_1)  \frac{c}{c+s} \, .
		$$
		Then, one finds that
		$$
		F(s) := H(s) - {\mathcal E}_{\alpha_1}(s;\lambda) = \frac{\lambda \, s^{\alpha_1 - 1} (s^{B(s)} - 1)}{(s^{\alpha_1 + B(s)} + \lambda)(s^{\alpha_1} + \lambda)} \, .
		$$
		In other words, $F(s)$ is defined as the difference of two complex functions that admit inverse Laplace transforms and it vanishes as $|s| \to \infty$. Hence we can define $f(t) := \mathcal{L}^{-1} \left( F(s) \, ; \, t \right)$ and 
		$$
		\lim_{t \to 0^+} f(t) = 0 \, ,
		$$
		because of Theorem \ref{thm:LT_IVT}.
		
		One can easily prove the second statement again by recalling the proof of Proposition \ref{prop:KernelIntegrRelat}, specifically $s A(s) =\alpha_2 + C(s)$ and $C(s) = (\alpha_1 - \alpha_2) s/ (c+s)$, defining $G(s):= \mathcal{L} \left( g(t) \, ; \, s \right) = H(s) - {\mathcal E}_{\alpha_2}(s;\lambda)$ and applying Theorem \ref{thm:LT_FVT}.  \qed}
\end{proof}

This result is particularly important since it shows that solutions of the relaxation VO-FDE \eqref{eq:S_Relaxation} converge to solutions of the CO relaxation FDEs of order $\alpha_1$ and $\alpha_2$ respectively for $t\to 0^+$ and $t\to \infty$. This result had been already observed in \cite{GarrappaGiustiMainardi2021}, albiet only at the numerical level. In Figure \ref{fig:Fig_RelEq_Diff} we show the difference between solutions of VO and CO relaxation equations where, for simplicity we considered $y_0=1$.

\begin{figure}[ht]
	\centering
	\begin{tabular}{c}
		\includegraphics[width=0.60\textwidth]{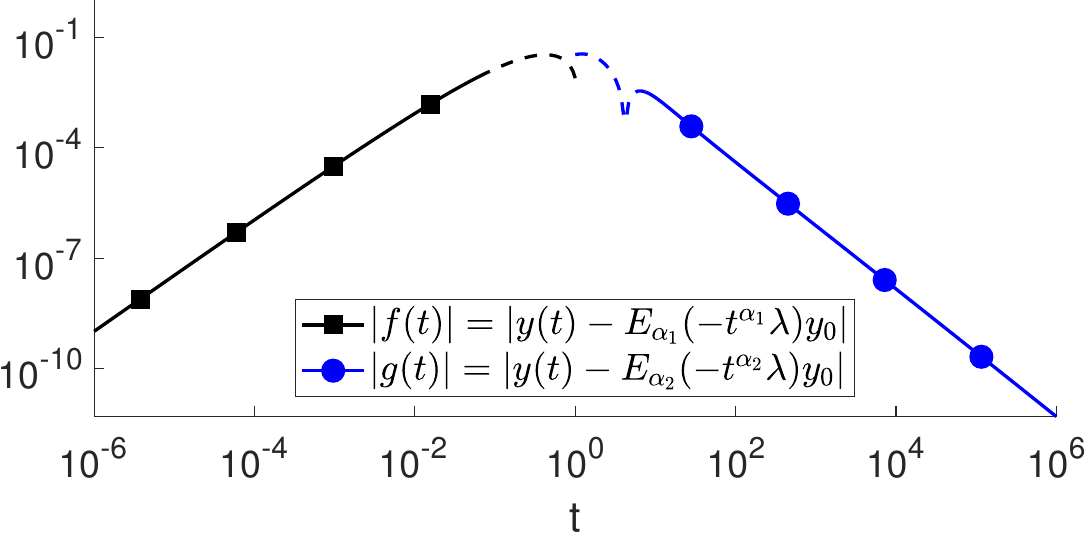} \\
	\end{tabular}
	\caption{Difference between solutions of the VO relaxation FDE \eqref{eq:S_Relaxation} and those of corresponding CO relaxation FDEs of order $\alpha_1$ and $\alpha_2$ for small and large $t$; here $\alpha_1=0.6$, $\alpha_2=0.8$, $c=2.0$, $\lambda=1.0$ and $y_0=1$.}\label{fig:Fig_RelEq_Diff}
\end{figure}

In Figure \ref{fig:Fig_Singularity_H} we show heuristically that assuming that $H(s)$ does not have singularities on $\mathcal{H}_{\rm c}$ is not particularly restrictive. Specifically, in Figure \ref{fig:Fig_Singularity_H} we plot the location of singularities  of $H(s)$,  evaluated numerically, as $\lambda$ varies in $[0.01,5]$.

\begin{figure}[ht]
	\centering
	\begin{tabular}{cc}
		\includegraphics[width=0.45\textwidth]{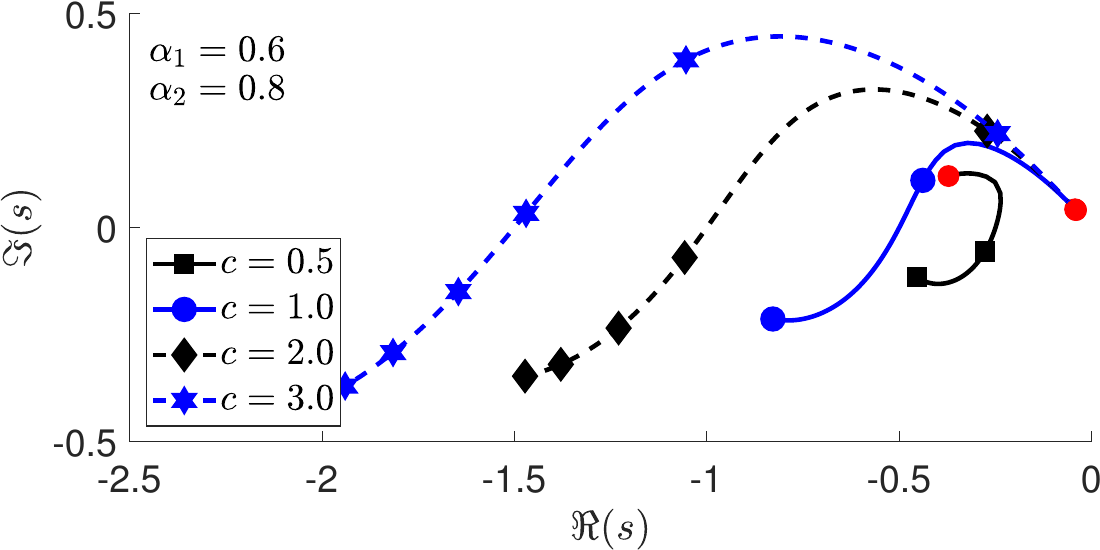} &
		\includegraphics[width=0.45\textwidth]{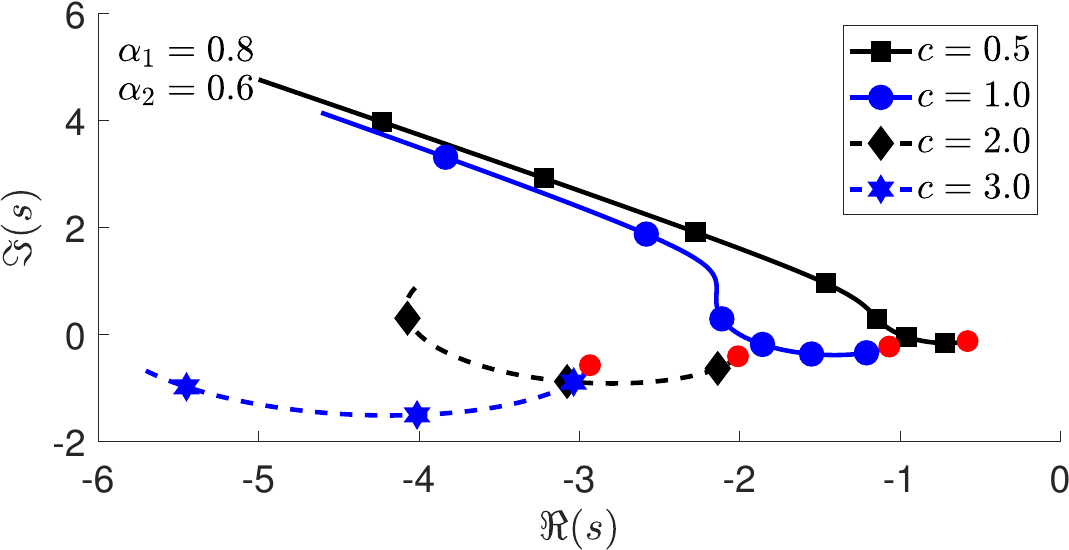} \\
		\includegraphics[width=0.45\textwidth]{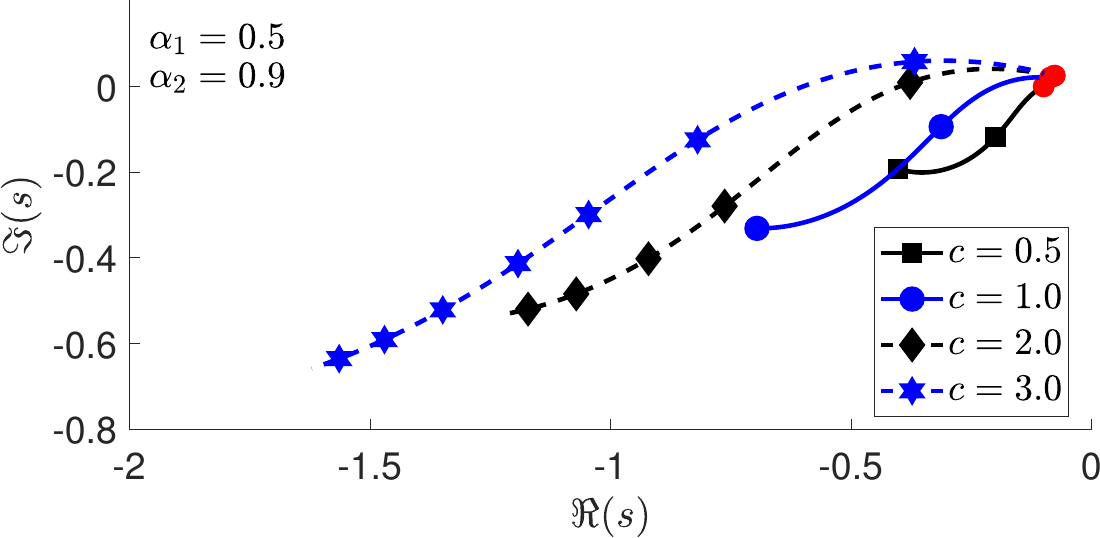} &
		\includegraphics[width=0.45\textwidth]{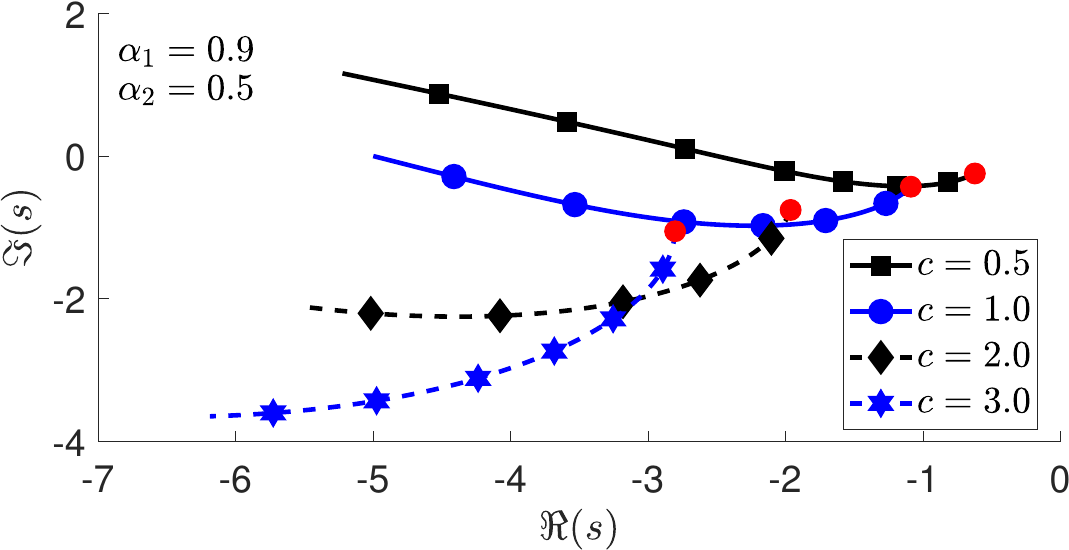} \\
	\end{tabular}
	\caption{Location of singularities of $H(s)$ for some values of $\alpha_1$, $\alpha_2$ and $c$ for $\lambda$ varying in $[0.01,5]$ (red bullets denote the starting value $\lambda=0.01$).}\label{fig:Fig_Singularity_H}
\end{figure}

\section{A numerical approach to exponential-type variable-order differential equations}\label{S:ConvolutionQuadrature}

Let us now introduce, for the exponential VO transition $\alpha(t)$ described by \eqref{eq:alpha_exp}, the VO-FDE 
\begin{equation}\label{eq:SFDE}
	\left\{\begin{array}{l}
		\DS^{\alpha(t)}_0 y(t) = f(t,y(t)), \quad t \in (0,T], \\
		y(t) = y_0, \\
	\end{array}\right. 
\end{equation}
which, after applying $\IS^{\alpha(t)}_0$ to both sides, and in view of \eqref{eq:VO_FundTheorCalculus} (and with the notation introduced in Section \ref{S:Propr_TransFunction_Exp}), can be equivalently reformulated as the  integral equation $y(t) = y_0 + \IS^{\alpha(t)}_0 f(t,y(t))$, i.e.
\begin{equation}\label{eq:SFDE_VIE}
	y(t) = y_0  + \int_0^t \psi_{\alpha_1,\alpha_2}(t-\tau) f(\tau,y(\tau))   \du \tau .
\end{equation}

The main difficulty to numerically approximate solutions of \eqref{eq:SFDE} lies in the absence of an analytical formulation of $\psi_{\alpha_1,\alpha_2}(t)$. Since just its LT $\Psi_{\alpha_1,\alpha_2}(s)$ is known, it is natural to exploit convolution quadrature rules (CQRs) introduced by Lubich in his pioneering works \cite{Lubich1988a,Lubich1988b}, and later discussed in \cite{Lubich2004}. The main feature of these rules is that they allow  to approximate the  convolution integral  in \eqref{eq:SFDE_VIE} without requiring the explicit knowledge of the convolution kernel, but rather just of its LT.

The application to \eqref{eq:SFDE_VIE} is however not straightforward. Not only because Lubich's theory requires the fulfillment of some assumptions to be valid, but also in view of some difficulties for the computation of convolution weights.

In this section, after briefly review CQRs in Lubich's framework, we will study their application in the context of VO operators and, specifically, we will devise a strategy for the accurate computation of convolution weights supported by a detailed error analysis.



\subsection{Convolution quadrature rules for variable-order operators}

CQRs introduced by Lubich provide a generalization of linear multistep methods (LMMs) for ordinary differential equations $y'(t) = f(t,y(t))$. Given a grid $t_n = nh$, with constant step-size $h>0$, a $k$-step  $(\rho,\sigma)$ LMM is defined by 
\begin{equation}\label{eq:LMM}
	\sum_{j=0}^{k} \rho_{j} y_{n-j} = h \sum_{j=0}^{k} \sigma_{j} f(t_{n-j},y_{n-j}) , 
\end{equation}
with $\rho(\xi)=\rho_{0}\xi^{k} + \rho_{1}\xi^{k-1} + \dots + \rho_{k}$ and $\sigma(\xi)=\sigma_{0}\xi^{k} + \sigma_{1}\xi^{k-1} + \dots + \sigma_{k}$ respectively the first and second characteristic polynomials and 
$\delta(\xi) = {\rho(1/\xi)}/{\sigma(1/\xi)}$
the generating function of the LMM determining its stability domain 
\[
S_{\delta} = \Cset \backslash \left\{ \delta(\xi) \, ; \, |\xi|<  1\right\} .
\]

A discrete CQR extending the  $(\rho,\sigma)$ LMM to solve \eqref{eq:SFDE_VIE}, and preserving the same convergence order, say $p$, of the LMM, reads as 
\begin{equation}\label{eq:CQR_General}
	y_n = y_0 + \sum_{j=0}^{n} \omega_{n-j} f(t_j,y_j) + \sum_{j=0}^{\nu} w_{n,j} f(t_j,y_j)
\end{equation}
where $\omega_n$ are the coefficients in the expansion of 
\begin{equation}\label{eq:CQR_Expansion}
	\Psi_{\alpha_1,\alpha_2} \Bigl( \frac{\delta(\xi)}{h} \Bigr) = \sum_{n=0}^{\infty} \omega_n \xi^n . 
\end{equation}

The second sum in \eqref{eq:CQR_General} is introduced to deal with a possible lack of smoothness of the solution at the origin (which is typical of solutions of FDEs and it is reasonably expected in this VO setting since the result in Proposition \ref{prop:KernelIntegrRelat}).

Some assumptions must be fulfilled in order to this procedure properly works and  the CQR (\ref{eq:CQR_General}) converges with the same order $p$ of the underlying LMM. In particular:
\begin{itemize}
	\item[{H1.}] $\Psi_{\alpha_1,\alpha_2}(s)$ must be analytic in a sector $\Sigma_{\gamma,d} = \bigl\{ s \in \Cset \, : \, |\arg(s-d)| < \pi - \gamma \bigl\}$, with $\gamma < \frac{\pi}{2}$ and $d\in \Rset$, and satisfying  $|\Psi_{\alpha_1,\alpha_2}(s)| \le M |s|^{-\mu}$ for some real $\mu>0$ and $M<\infty$;
	\item[{H2.}] the LMM must be $A(\theta)$-stable (namely, the stability domain $S_{\delta}$ must contain the wedge $\{z \in \Cset \,:\; |\arg(-z)|<\theta\}$ for 
	$\theta > \gamma$) and the generating function $\delta(\xi)$ must be analytic and without zeros in a neighborhood of $|\xi| \le 1$ (except for a zero at $\xi =1$).
\end{itemize}

Concerning assumption H1, we observe that  $sA(s)$ has just a pole on the negative real semi-axis (at $s=-c$), where for convenience we placed the branch-cut (see Section \ref{S:Propr_TransFunction_Exp}). Therefore $\Psi_{\alpha_1,\alpha_2}(s)$ is analytic in any sector $\Sigma_{\gamma,d}$ with $0<\gamma < \pi$ and $d>0$. Moreover, $sA(s) \to \alpha_1$ as $|s|\to \infty$ and hence $|\Psi_{\alpha_1,\alpha_2}(s)| \le M |s|^{-\mu}$ for $\mu=\alpha_1$



{Assumption H2 is satisfied by a variety of LMMs. In particular in this work we focus  on the backward Euler method $y_{n+1} = y_n + h f(t_{n+1},y_{n+1})$, whose first and second characteristic polynomials are $\rho(\xi)=\xi-1$ and $\sigma(\xi) = \xi$ and the corresponding generating function is hence $\delta(\xi)  = 1-\xi$. 
	
	Since the method is expected to inherit the convergence order $p=1$ of the backward Euler method \cite[Theorem 3.1]{Lubich1988a}, there is no need of introducing starting weights in (\ref{eq:CQR_General}) and therefore its extension to the VO integral equation \eqref{eq:SFDE_VIE} simply reads as
	\begin{equation}\label{eq:GL_Conv_General}
		y_n = y_0 + \sum_{j=0}^{n} \omega_{n-j} f(t_j,y_j) ,
	\end{equation}
	where the convolution weights $\omega_n$ are the coefficients in the expansion of
	\begin{equation}\label{eq:GL_VO_Weights_Expansion}
		\widehat\Psi_{\alpha_1,\alpha_2}^{[h]}(\xi) \coloneqq \Psi_{\alpha_1,\alpha_2} \left( \frac{1-\xi}{h} \right) 
		= \sum_{n=0}^{\infty} \omega_n \xi^n .
	\end{equation}

}

{\begin{remark}
		It can be of interest to observe that in the CO case the CQR obtained by exploiting the generating function of the backward Euler method has the same weights of the Gr\"unwald-Letnikov (GL) discretization scheme (we refer, for instance,  to \cite{Diethelm2010,GorenfloAbdelRehim2007,SamkoKilbasMarichev1993,SchererKallaTangHuang2011} for more details about this scheme). The method proposed here can be in some sense considered as a sort of generalization of the GL scheme to VO operators \eqref{eq:ScarpiIntegralConvolution} and \eqref{eq:ScarpiDerivativeConvolution}.
	\end{remark}
}

\subsection{Computation of weights}\label{S:VO_GL_EvaluationWeights}

Computing convolution weights $\omega_n$ in \eqref{eq:GL_Conv_General} is a challenging task which must be accomplished with high precision to avoid loss of accuracy and, possibly, in a fast way. 


From \eqref{eq:GL_VO_Weights_Expansion} we observe that $\omega_n$ are coefficients in the Taylor expansion of $\widehat\Psi_{\alpha_1,\alpha_1}^{[h]}(\xi)$ around the origin. Therefore, they can be expressed in terms of Cauchy integrals
\[
\omega_n = \frac{1}{n!} \frac{\du^n}{\du \xi^n} \widehat\Psi_{\alpha_1,\alpha_2}^{[h]}(\xi) \Bigl|_{\xi=0}
= \frac{1}{2 \pi \iu} \int_{{\mathcal C}} \frac{\widehat\Psi_{\alpha_1,\alpha_2}^{[h]}(z)}{z^{n+1}} \du z
, \quad n = 0, 1, \dots,
\]
over contours  ${\mathcal C}$ encompassing $\xi=0$ in the region of analiticity of  $\widehat\Psi_{\alpha_1,\alpha_2}^{[h]}(\xi)$.

The function $\widehat\Psi_{\alpha_1,\alpha_2}^{[h]}(\xi)$ is analytic the whole complex plane $\Cset$ except the real line $[1,+\infty)$. Hence, we select ${\mathcal C}$ as a circle with center at the origin and radius $0<\rho < 1$. Since ${\mathcal C} = \bigl\{ z \in \Cset \, | \, z = \rho \eu^{\iu \theta}, \, \theta \in [0, 2\pi]\}$, it is
\begin{equation}\label{eq:WeightsIntegralCauchyCircle}
	\omega_n = \frac{\rho^{-n}}{2 \pi} \int_{0}^{2\pi} \Psi_{\alpha_1,\alpha_2}^{[h]}\bigl(\rho \eu^{\iu \theta}\bigr) \eu^{-\iu n \theta} \du \theta 
	, \quad n = 0, 1, \dots
\end{equation}
and the application of the compound trapezoidal quadrature rule on $L$ nodes $\theta_k = 2 \pi k/L$, $k=0,1,\dots,L-1$, provides the approximations
\begin{equation}\label{eq:WeightsQuadratureRule}
	\omega_n^{(\rho,L)} = \frac{\rho^{-n}}{L} \sum_{k=0}^{L} \Psi_{\alpha_1,\alpha_2}^{[h]}\bigl(\rho \eu^{\iu 2 \pi k /L}\bigr) \eu^{-\iu 2 \pi k n/L } 
	, \quad n = 0, 1, \dots
\end{equation}
a formula which lends itself to being calculated by means of an efficient  fast Fourier transform algorithm.


The quadrature rule \eqref{eq:WeightsQuadratureRule} depends on two parameters: the contour radius $\rho$ and the number $L$ of quadrature nodes. They are strictly related since their ratio determines the step-size of the quadrature rule and, hence, the discretization error. However, in the finite precision arithmetic of computers, the parameter $\rho$ plays a further important role: it indeed determines the distance of  the integration contour from singularities of $\widehat\Psi_{\alpha_1,\alpha_2}^{[h]}(z)$ and choosing a contour too  close to the singularities may leads to strong round-off errors. It is therefore necessary to separately study discretization and round-off errors and take both of them into account for selecting parameter $\rho$ and $L$. The aim is minimizing the error and  taking it below a given tolerance $\tau>0$ (possibly close to the precision machine), namely to obtain
\[
|\omega_n  - \omega_n^{(\rho,L)}|<\tau, \quad n = 0, 1, \dots N.
\]


\subsection{Discretization error}
The main error source for computation of weights $\omega_n$ is the discretization error in the quadrature rule \eqref{eq:WeightsQuadratureRule}. In this respect we are able to provide the following result.

\begin{proposition}
	Let $0<\rho<r<1$ and $a=\log(r/\rho)$. Then for any $n \in \Nset$ and $L>0$ it is
	\begin{equation}\label{eq:ErrorWeights}
		|\omega_n  - \omega_n^{(\rho,L)}| \le \frac{M_{r,\rho} \rho^{-n}}{r^L\rho^{-L} - 1}
		, \quad
		M_{r,\rho} = \max_{\Imag z \ge -a } \bigl| \widehat\Psi_{\alpha_1,\alpha_2}^{[h]}\bigl(z\bigr) \bigr| .
	\end{equation}
\end{proposition}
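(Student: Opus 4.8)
The plan is to convert the trapezoidal quadrature rule on the circle into a discrete sum of the Taylor coefficients of $\widehat\Psi_{\alpha_1,\alpha_2}^{[h]}$, and then control the aliasing error by exploiting analyticity on a slightly larger circle of radius $r$. First I would recall that $\widehat\Psi_{\alpha_1,\alpha_2}^{[h]}(\xi) = \sum_{m=0}^{\infty} \omega_m \xi^m$ converges for $|\xi| < 1$, so in particular on the circle $|\xi| = \rho$. Substituting this series into \eqref{eq:WeightsQuadratureRule} and interchanging sum and finite sum gives
\[
\omega_n^{(\rho,L)} = \frac{\rho^{-n}}{L} \sum_{k=0}^{L-1} \sum_{m=0}^{\infty} \omega_m \rho^m \eu^{\iu 2\pi k m/L} \eu^{-\iu 2\pi k n /L} = \sum_{m=0}^{\infty} \omega_m \rho^{m-n} \cdot \frac{1}{L}\sum_{k=0}^{L-1} \eu^{\iu 2\pi k (m-n)/L} .
\]
The inner geometric sum over $k$ equals $1$ when $m \equiv n \pmod L$ and $0$ otherwise, so the aliasing formula
\[
\omega_n^{(\rho,L)} = \sum_{\substack{m \ge 0 \\ m \equiv n \,(\mathrm{mod}\, L)}} \omega_m \rho^{m-n} = \omega_n + \sum_{\ell=1}^{\infty} \omega_{n+\ell L}\, \rho^{\ell L}
\]
follows (assuming $n < L$, or more generally only the terms with $m>n$ survive after cancelling $\omega_n$). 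Hence $|\omega_n - \omega_n^{(\rho,L)}| \le \sum_{\ell=1}^{\infty} |\omega_{n+\ell L}|\, \rho^{\ell L}$.

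Next I would bound the Taylor coefficients $|\omega_m|$ via Cauchy estimates on a circle of radius $r$ with $\rho < r < 1$: since $\widehat\Psi_{\alpha_1,\alpha_2}^{[h]}$ is analytic on $|\xi| \le r$ (as it is analytic off $[1,+\infty)$), one has $|\omega_m| \le M_{r}\, r^{-m}$ where $M_r = \max_{|z| = r} |\widehat\Psi_{\alpha_1,\alpha_2}^{[h]}(z)|$. Plugging this in,
\[
|\omega_n - \omega_n^{(\rho,L)}| \le \sum_{\ell=1}^{\infty} M_r\, r^{-(n+\ell L)} \rho^{\ell L} = M_r\, r^{-n} \sum_{\ell=1}^{\infty} (\rho/r)^{\ell L} = M_r\, r^{-n} \cdot \frac{(\rho/r)^L}{1 - (\rho/r)^L} .
\]
Writing $(\rho/r)^L = 1/(r^L \rho^{-L})$ and rearranging gives $M_r\, r^{-n}/(r^L\rho^{-L} - 1)$. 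To match the stated bound with the factor $\rho^{-n}$ (rather than $r^{-n}$) and the $\max$ taken over the half-plane $\Imag z \ge -a$ with $a = \log(r/\rho)$, I would switch to the logarithmic/exponential parametrization: set $z = \eu^{\iu\theta}$-type variables aside and instead note $\widehat\Psi^{[h]}$ composed with $\xi = \rho\, \eu^{w}$ is what naturally appears; the circle $|\xi| = r$ corresponds to $\Real w = \log(r/\rho) = a$, i.e. $\Imag(\text{something}) = -a$ after the rotation convention used in \eqref{eq:WeightsIntegralCauchyCircle}. Concretely, since $r^{-n} \le \rho^{-n}$ is false for $r>\rho$ — so one actually keeps track more carefully — I would instead bound $|\omega_m| \le M_{r,\rho}\, \rho^{-m}$ directly where $M_{r,\rho}$ is the maximum of $|\widehat\Psi^{[h]}|$ over the relevant region, absorbing the geometry into the definition of $M_{r,\rho}$ as written; the key point is that the region $\{\Imag z \ge -a\}$ in the paper's variables is precisely the image of the annulus $\rho \le |\xi| \le r$ under their coordinate change, so the Cauchy estimate reads $|\omega_n| \le M_{r,\rho}\rho^{-n}$ and the geometric series sums to the claimed expression.

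The main obstacle I anticipate is bookkeeping the coordinate change between the circle description in \eqref{eq:WeightsQuadratureRule} and the half-plane description $\Imag z \ge -a$ appearing in $M_{r,\rho}$, i.e.\ correctly identifying which region the maximum is taken over and getting the powers of $\rho$ versus $r$ to land exactly as stated; the analytic content (aliasing identity plus Cauchy estimate plus geometric series) is routine, but one must be careful that the interchange of summation and the finite sum is justified by absolute convergence of $\sum |\omega_m| \rho^m$ for $\rho < 1$, and that the indexing $m \equiv n \pmod L$ correctly produces only forward terms $n + \ell L$. A minor point worth checking is the upper limit $k = L$ versus $k = L-1$ in \eqref{eq:WeightsQuadratureRule} (the node $\theta_L = 2\pi$ coincides with $\theta_0 = 0$), which affects the normalization by at most a harmless factor and does not change the error estimate's form.
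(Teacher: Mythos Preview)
Your aliasing-plus-Cauchy argument is correct and is in fact the standard proof of the very result the paper cites as a black box. The paper's own proof is a two-liner: it writes the integrand as the $2\pi$-periodic function $v_n(\theta)=\widehat\Psi^{[h]}_{\alpha_1,\alpha_2}(\rho \eu^{\iu\theta})\eu^{-\iu n\theta}$, notes that its analytic continuation $v_n(x+\iu y)=\widehat\Psi^{[h]}_{\alpha_1,\alpha_2}(\rho \eu^{-y}\eu^{\iu x})\eu^{ny-\iu nx}$ is analytic for $y>-a$ (equivalently $\rho \eu^{-y}<r$), and then invokes \cite[Theorem~3.1]{TrefethenWeideman2014} on the geometric convergence of the trapezoidal rule for periodic analytic functions to obtain $|\omega_n-\omega_n^{(\rho,L)}|\le M_{r,\rho}\rho^{-n}/(\eu^{aL}-1)$. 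Your route unpacks that theorem: the aliasing identity $\omega_n^{(\rho,L)}-\omega_n=\sum_{\ell\ge1}\omega_{n+\ell L}\rho^{\ell L}$ is precisely the discrete Poisson summation underlying the Trefethen--Weideman estimate, and your Cauchy bound on $|\omega_m|$ plays the role of the strip bound on $|v_n|$. So the two approaches are equivalent in substance; yours is self-contained, the paper's is a one-line citation.

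One slip to fix: you write that ``$r^{-n}\le\rho^{-n}$ is false for $r>\rho$'', but it is \emph{true} (for $n\ge0$), so your clean intermediate bound $M_r\,r^{-n}/(r^L\rho^{-L}-1)$ with $M_r=\max_{|\xi|=r}|\widehat\Psi^{[h]}_{\alpha_1,\alpha_2}(\xi)|$ already implies the stated form with $\rho^{-n}$, and no further coordinate juggling is needed. The half-plane condition $\Imag z\ge -a$ in the paper's definition of $M_{r,\rho}$ refers to the $\theta$-variable of the periodic integrand (where the strip of analyticity lives), not directly to the argument of $\widehat\Psi^{[h]}$; once you see that, the bookkeeping you flagged as the main obstacle dissolves.
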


\begin{proof}
	Consider the $2\pi$-periodic function $v_n(\theta) = \widehat\Psi_{\alpha_1,\alpha_2}^{[h]}\bigl(\rho \eu^{\iu \theta}\bigr)  \eu^{-\iu n \theta}$, such that $\omega_n = \frac{\rho^{-n}}{2\pi} \int_{0}^{2\pi} v_n(\theta) \du \theta$. Its complex extension 
	
	\[
	v_n(x + \iu y) = \widehat\Psi_{\alpha_1,\alpha_2}^{[h]}\bigl(\rho \eu^{\iu (x+\iu y)}\bigr) \eu^{-\iu n (x+\iu y)}
	=  \widehat\Psi_{\alpha_1,\alpha_2}^{[h]}\bigl(\rho \eu^{-y}  \eu^{\iu x}\bigr) \eu^{n y -\iu n x } 
	\]
	is analytic whenever $\rho \eu^{-y}<r<1$, namely in the strip $y>-a$. We can therefore apply \cite[Theorem 3.1]{TrefethenWeideman2014} to obtain $	|\omega_n  - \omega_n^{(\rho,L)}| \le {M_{r,\rho} \rho^{-n} }/({\eu^{a L} - 1})$, from which the proof follows. \qed
	
\end{proof}

\subsection{Round-off errors}

The discretization error \eqref{eq:ErrorWeights} is not the only error affecting the accuracy of computed weights $\omega_n$. When high accuracy and/or a large number of weights is requested, round-off errors due the finite-precision arithmetic of computers must be taken into account.

To introduce round-off errors in our analysis, we represent weights which are actually computed as 
\[
\widehat\omega_n^{(\rho,L)} = \frac{\rho^{-n}}{L} \sum_{k=0}^{L} \widehat\Psi_{\alpha_1,\alpha_2}^{[h]}\bigl(\rho \eu^{\iu 2 \pi k /L}\bigr) \eu^{-\iu 2 \pi k n/L } (1+ \epsilon_k) ,
\]
with $|\epsilon_k|\approx \epsilon$ and $\epsilon$ the precision machine. Thus, if we consider
\[
\widehat{M}_{\rho} = \max_{k=0,L} \bigl|\widehat\Psi_{\alpha_1,\alpha_2}^{[h]}\bigl(\rho \eu^{\iu 2 \pi k /L}\bigr) \eu^{-\iu 2 \pi k n/L }\bigr| = \max_{k=0,L} \bigl|\widehat\Psi_{\alpha_1,\alpha_2}^{[h]}\bigl(\rho \eu^{\iu 2 \pi k /L}\bigr)\bigr|, 
\]
since $0<\rho<1$, and in view of Lemma \ref{lem:Psi_BoundCircle} below, we obtain the following rough estimation 
\[
|\omega_n^{(\rho,L)}  - \widehat\omega_n^{(\rho,L)}|  \le \rho^{-n} \widehat{M}_{\rho} \epsilon \le \rho^{-N} M_{r}  \epsilon
, \quad M_{r} = \bigl|\Psi_{\alpha_1,\alpha_2}^{[h]}(r)\bigr|,
\]
holding for any $n = 0,1,\dots, N$ and for a reasonably small step-size $h<1-r$.

The overall error is hence
\[
|\omega_n - \widehat\omega_n^{(\rho,L)}| \le \underbrace{|\omega_n - \omega_n^{(\rho,L)} |}_{\text{discret. error}} + \underbrace{| \omega_n^{(\rho,L)} - \widehat\omega_n^{(\rho,L)}|}_{\text{round-off error}} ,
\]
and the goal is to keep the round-off error smaller than the discretization error $\tau$, thus to possibly neglect it. We can therefore fix a safety factory $0<F_{\text{s}}<1$, for instance $F_{\text{s}}\approx 0.01\div0.1$, and impose the round-off error to be proportional to $F_{\text{s}} \tau$, thus to select
\begin{equation}\label{eq:Radius_RoundOff_Estimation}
	\rho^{-N} M_r \epsilon = F_{\text{s}} \tau
	\quad \Rightarrow \quad
	\rho = \exp\Bigl( - \frac{1}{N}  \log \frac{F_{\text{s}} \tau}{M_{r} \epsilon}\Bigr) .
\end{equation}

\subsection{A strategy for parameters selection}

The proposed strategy to select parameters $\rho$, $r$ and $L$ is to first fix a value for $r<1$, which for convenience must be chosen close to 1, and determine by \eqref{eq:Radius_RoundOff_Estimation} a suitable contour radius $\rho<r$ on the basis of the number $N$ of nodes which must be computed. When \eqref{eq:Radius_RoundOff_Estimation} leads to a value of $\rho \ge r$, the value of $r$ must be suitably increased.

The tolerance $\tau$ should be chosen as small as possible to consider weights as computed in a virtually exact way, but not too strict to impose a huge number of quadrature nodes to achieve it. Therefore, $\tau$ is selected in the range $10^{-12}\div 10^{-13}$.

Once $\rho$ is selected, it is necessary to provide an estimation for the value of $M_{r,\rho}$ in \eqref{eq:ErrorWeights}. It seems not possible to provide a theoretical estimation for $M_{r,\rho}$ and therefore it will be necessary to sample  $\widehat\Psi_{\alpha_1,\alpha_2}^{[h]}\bigl(z\bigr)$ on a sufficiently large number of points $z \in \Cset$, with $\Imag z \ge -\log(r/\rho)$ and determine its maximum.

Finally, it is simple to use \eqref{eq:ErrorWeights} to obtain the number $L$ of nodes in the trapezoidal quadrature rule \eqref{eq:WeightsQuadratureRule} to to achieve the accuracy $|\omega_n  - \omega_n^{(\rho,L)}|<\tau$ by selecting
\[
L \ge \frac{\log(M_{r,\rho} \rho^{-n} + \tau)-\log \tau}{\log r - \log \rho} .
\]

\subsection{Two technical results}

In this subsection we collect two technical results which has been used for the estimation of round-off error in the previous subsection. We confine their presentation here with the only aim of lightening the reading of the paper.

\def\zR{z_{\texttiny{R}}}
\def\zI{z_{\texttiny{I}}}
\begin{lemma}\label{lem:Psi_ComplexRepresentation}
	Let $\xi = x +\iu  y  \in \Cset$. Then 
	\begin{equation}\label{eq:Psi_ComplexRepresentation}
		\bigl| \widehat\Psi_{\alpha_1,\alpha_2}^{[h]}(\xi) \bigr| = \exp \left(  A(x,y) \left( \ln h -\frac{1}{2} \ln \bigl((1-x)^2+y^2) \right) + \theta_{\xi}  B(x,y) \right) ,
	\end{equation}
	where $\theta_{\xi} = \arg(1-\xi)$ and
	\[
	\begin{aligned}
		A(x,y) &= \frac{\Bigl[ \alpha_1 y^2 + \bigl(ch+(1-x)\bigr)\bigl(\alpha_1(1-x)+\alpha_2 ch\bigr)\Bigr]}{\bigl(ch+(1-x)\bigr)^2 + y^2} ,\\
		B(x,y) &= \frac{(\alpha_2 - \alpha_1) ch y }{\bigl(ch+(1-x)\bigr)^2 + y^2}. \\
	\end{aligned}
	\]
\end{lemma}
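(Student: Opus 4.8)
The plan is to obtain the modulus of $\widehat\Psi_{\alpha_1,\alpha_2}^{[h]}(\xi)$ by writing the function explicitly as a complex exponential and separating real and imaginary parts in the exponent. Recall from \eqref{eq:GL_VO_Weights_Expansion} and \eqref{eq:ScarpiKernels} that
\[
	\widehat\Psi_{\alpha_1,\alpha_2}^{[h]}(\xi) = \Psi_{\alpha_1,\alpha_2}\Bigl(\frac{1-\xi}{h}\Bigr) = \left(\frac{1-\xi}{h}\right)^{-\, s A(s)\big|_{s=(1-\xi)/h}} = \exp\Bigl( - s A(s) \, \mathrm{Log}\,\tfrac{1-\xi}{h} \Bigr)\Big|_{s=(1-\xi)/h},
\]
where $\mathrm{Log}$ denotes the principal branch (with the branch-cut on the negative real axis, as fixed in Section \ref{S:Propr_TransFunction_Exp}). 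First I would set $s = (1-\xi)/h$ and compute $sA(s)$ explicitly using \eqref{eq:A_Exp}: since $sA(s) = (\alpha_2 c + \alpha_1 s)/(c+s)$, substituting $s=(1-\xi)/h$ and clearing the factor $h$ gives $sA(s) = \bigl(\alpha_2 c h + \alpha_1 (1-\xi)\bigr)/\bigl(ch + (1-\xi)\bigr)$. Writing $1-\xi = (1-x) - \iu y$ and multiplying numerator and denominator by the conjugate of the denominator, the real and imaginary parts of $sA(s)$ are exactly the functions $A(x,y)$ and $-B(x,y)$ claimed in the statement (the sign on $B$ being absorbed below); this is a routine but slightly lengthy algebraic simplification of a ratio of two complex affine expressions in $\xi$.

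Next I would treat the logarithm: $\mathrm{Log}\,\frac{1-\xi}{h} = \ln\bigl|\frac{1-\xi}{h}\bigr| + \iu \arg(1-\xi) = \ln\frac{|1-\xi|}{h} + \iu\,\theta_\xi$, where $|1-\xi| = \sqrt{(1-x)^2+y^2}$ and $\theta_\xi = \arg(1-\xi)$; hence $\mathrm{Log}\,\frac{1-\xi}{h} = \tfrac{1}{2}\ln\bigl((1-x)^2+y^2\bigr) - \ln h + \iu\,\theta_\xi$. Then I would form the product $-sA(s)\cdot\mathrm{Log}\,\frac{1-\xi}{h} = -\bigl(A(x,y) - \iu B(x,y)\bigr)\bigl(\tfrac{1}{2}\ln((1-x)^2+y^2) - \ln h + \iu\theta_\xi\bigr)$ and extract its real part, since $|\exp(w)| = \exp(\Real\,w)$. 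Multiplying out, the real part equals $-A(x,y)\bigl(\tfrac{1}{2}\ln((1-x)^2+y^2) - \ln h\bigr) - B(x,y)\theta_\xi$, which rearranges to $A(x,y)\bigl(\ln h - \tfrac12\ln((1-x)^2+y^2)\bigr) + \theta_\xi B(x,y)$ — precisely the exponent in \eqref{eq:Psi_ComplexRepresentation}. Exponentiating yields the claimed formula.

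The only genuinely delicate point is the bookkeeping for the branch of the power function and of the logarithm: one must make sure that for the relevant range of $\xi$ (in the computations $|\xi| = \rho < 1$, so $1-\xi$ lies in the right half-plane and $\theta_\xi = \arg(1-\xi) \in (-\pi/2,\pi/2)$, well away from the branch-cut) the principal branch is the correct one, and that $s=(1-\xi)/h$ avoids the pole of $A(s)$ at $s=-c$ and the branch-cut of $s\mapsto s^{sA(s)}$ on the negative real axis — both of which hold automatically in the right half-plane. The rest is the elementary identity $|e^{w}| = e^{\Real w}$ together with the real-part extraction of a product of two complex numbers. I would therefore expect the algebraic simplification of $\Real(sA(s))$ and $\Imag(sA(s))$ into the stated $A(x,y)$, $B(x,y)$ to be the most calculation-heavy step, but conceptually straightforward; the argument contains no real obstacle, which is presumably why the paper relegates it to a ``technical result.''
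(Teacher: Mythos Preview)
Your approach is essentially identical to the paper's: write $\widehat\Psi_{\alpha_1,\alpha_2}^{[h]}(\xi)$ as an exponential, split $sA(s)$ and $\mathrm{Log}\tfrac{1-\xi}{h}$ into real and imaginary parts, multiply out, and read off the real part via $|e^{w}|=e^{\Real w}$.

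There is, however, a sign slip in your bookkeeping. With $1-\xi=(1-x)-\iu y$, rationalising
\[
\frac{\alpha_2 ch+\alpha_1(1-\xi)}{ch+(1-\xi)}
\]
gives imaginary part $\dfrac{(\alpha_2-\alpha_1)ch\,y}{(ch+(1-x))^2+y^2}=+B(x,y)$, not $-B(x,y)$ as you state. The paper writes this explicitly as $sA(s)=A(x,y)+\iu B(x,y)$. With the correct sign, the real part of $-(A+\iu B)(L+\iu\theta_\xi)$ (where $L=\tfrac12\ln((1-x)^2+y^2)-\ln h$) is $-AL+B\theta_\xi$, which is exactly the claimed exponent. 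In your write-up you obtain $-AL-B\theta_\xi$ and then assert it ``rearranges to $A(\ln h-\tfrac12\ln(\dots))+\theta_\xi B$''; that rearrangement is false (the $B\theta_\xi$ term should stay negative), so your two sign errors accidentally cancel. The overall strategy is sound, but the intermediate algebra needs correcting.
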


\begin{proof}
	First rewrite $\Psi_{\alpha_1,\alpha_2}^{[h]}(\xi) $ as
	\[
	\Psi_{\alpha_1,\alpha_2}^{[h]}(\xi) 
	= \exp\Bigl(- \frac{\alpha_2 c h + \alpha_1 (1-\xi)}{ch + 1 - \xi} \Bigl[ \ln (1-\xi) - \ln h\Bigr] \Bigr) 
	\]
	and observe that $A(x,y)$ and $B(x,y)$ are, respectively, the real and the imaginary part of the first term of the argument in the above exponential, i.e.
	\[
	\frac{\alpha_2 c h + \alpha_1 (1-\xi)}{ch + 1 - \xi} =
	\frac{\alpha_2 c h + \alpha_1  - \alpha_1 x - \iu \alpha_1 y }{ch + 1 - x - \iu y} =
	A(x,y)  + \iu B(x,y) . 
	\]
	
	Hence, after evaluating 
	\[
	\ln (1-\xi) = \ln |1-\xi| + \iu \theta  = \frac{1}{2} \ln \bigl((1-x)^2+y^2) + \iu \theta 
	,
	\] 
	the proof follows by means of elementary manipulations. \qed
\end{proof}

\begin{lemma}\label{lem:Psi_BoundCircle}
	Let  $z = \rho \eu^{\iu \theta}  \in \Cset$, $0<\rho < r < 1$, $-\pi\le \theta < \pi$ and $0<h<1-r$. Then 
	\[
	\bigl| \Psi_{\alpha_1,\alpha_2}^{[h]}(z) \bigr| \le \bigl|\Psi_{\alpha_1,\alpha_2}^{[h]}(r)\bigr|.
	\]
\end{lemma}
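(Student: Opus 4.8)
The plan is to combine the maximum‑modulus principle with the explicit modulus formula of Lemma~\ref{lem:Psi_ComplexRepresentation}, the core of the argument being to show that on the circle $|\xi|=r$ the modulus of $\Psi_{\alpha_1,\alpha_2}^{[h]}$ is largest at the real point $\xi=r$. First I would record that $\Psi_{\alpha_1,\alpha_2}^{[h]}(\xi)=\Psi_{\alpha_1,\alpha_2}\bigl((1-\xi)/h\bigr)=\exp\bigl(-wA(w)\ln w\bigr)$ with $w=(1-\xi)/h$ is analytic and zero‑free on $\Cset\setminus[1,+\infty)$: the only singular sets of $\Psi_{\alpha_1,\alpha_2}(w)$ are the branch cut $(-\infty,0]$ and the pole $w=-c$ of $A$, which pull back to $[1,+\infty)$ and to $\xi=1+ch>1$, respectively. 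In particular $\Psi_{\alpha_1,\alpha_2}^{[h]}$ is analytic on a neighbourhood of the closed disk $\{|\xi|\le r\}$, so by the maximum‑modulus principle and $|z|=\rho<r$ one gets $|\Psi_{\alpha_1,\alpha_2}^{[h]}(z)|\le\max_{|\xi|\le r}|\Psi_{\alpha_1,\alpha_2}^{[h]}(\xi)|=\max_{|\xi|=r}|\Psi_{\alpha_1,\alpha_2}^{[h]}(\xi)|$. It then remains to prove that this last maximum is attained at $\xi=r$.

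Next I would set $F(w)=wA(w)\ln w=\frac{\alpha_2 c+\alpha_1 w}{c+w}\ln w$ and $s(\theta)=(1-r\eu^{\iu\theta})/h$, so that Lemma~\ref{lem:Psi_ComplexRepresentation} reads $\ln|\Psi_{\alpha_1,\alpha_2}^{[h]}(r\eu^{\iu\theta})|=-\Real\bigl[F(s(\theta))\bigr]$. The hypothesis $0<h<1-r$ guarantees $\Real(s(\theta))\ge(1-r)/h>1$ and $|s(\theta)|\ge(1-r)/h>1$ on the whole circle, so $F$ is analytic there and $\Real[F]$ is harmonic on the closed disk $\{|w-1/h|\le r/h\}$. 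Since $\Psi_{\alpha_1,\alpha_2}^{[h]}$ is real on $(-1,1)$, Schwarz reflection makes $\theta\mapsto\Real[F(s(\theta))]$ even, so it suffices to show it is non‑decreasing on $[0,\pi]$; the minimum is then at $s(0)=s_0:=(1-r)/h$, which is exactly what is needed. Differentiating, $s'(\theta)=\iu\bigl(s(\theta)-1/h\bigr)=-\iu r\eu^{\iu\theta}/h$, whence $\frac{\du}{\du\theta}\Real[F(s(\theta))]=-\Imag\bigl[F'(s(\theta))(s(\theta)-1/h)\bigr]=\frac{r}{h}\,\Imag\bigl[F'(s(\theta))\eu^{\iu\theta}\bigr]$, with $F'(w)=\frac{(\alpha_1-\alpha_2)c\ln w}{(c+w)^2}+\frac{\alpha_2 c+\alpha_1 w}{w(c+w)}$.

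The remaining step is to verify $\Imag\bigl[F'(s(\theta))\eu^{\iu\theta}\bigr]\ge 0$ for $\theta\in(0,\pi)$. On that range $\Imag(s)<0$ while $\Real(s)>1$ and $|s|>1$; substituting $\eu^{\iu\theta}=(1-hs)/r$ and expanding $F'(s)(1-hs)$ into real and imaginary parts in terms of $\ln|s|$, $\arg s$, $\alpha_1$, $\alpha_2$, $c$, $h$, the inequality reduces to an elementary (if lengthy) estimate that exploits $0<\alpha_1,\alpha_2<1$ together with $\Real(s)>1$. Collecting the pieces gives $\Real[F(s(\theta))]\ge F(s_0)$, i.e.\ $|\Psi_{\alpha_1,\alpha_2}^{[h]}(r\eu^{\iu\theta})|\le|\Psi_{\alpha_1,\alpha_2}^{[h]}(r)|$, which together with the first step closes the proof.

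The main obstacle is precisely this last verification: everything resting on analyticity, the maximum‑modulus principle, harmonicity of $\Real[F]$ and Schwarz reflection is routine, but pinning down that the maximum of $|\Psi_{\alpha_1,\alpha_2}^{[h]}|$ on $|\xi|=r$ sits at $\xi=r$—equivalently, the sign of $\frac{\du}{\du\theta}\Real[F(s(\theta))]$ on $(0,\pi)$, equivalently a monotonicity property of an explicit real function—requires a careful term‑by‑term analysis, and is where the constraint $h<1-r$ (forcing $\Real(s)>1$ on the circle) is genuinely used.
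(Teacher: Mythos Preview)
Your reduction via the maximum-modulus principle to the claim that $|\widehat\Psi_{\alpha_1,\alpha_2}^{[h]}|$ attains its maximum on the circle $|\xi|=r$ at $\xi=r$ is sound, and the computation of $\frac{\du}{\du\theta}\Real[F(s(\theta))]=\frac{r}{h}\Imag[F'(s(\theta))\eu^{\iu\theta}]$ is correct. But the proof is not complete: the decisive step---showing $\Imag[F'(s(\theta))\eu^{\iu\theta}]\ge 0$ on $(0,\pi)$---is never carried out. You yourself call this ``the main obstacle'' and assert that it ``reduces to an elementary (if lengthy) estimate'', yet the expression mixes $\ln|s|$, $\arg s$, and the rational function $(\alpha_2 c+\alpha_1 s)/(s(c+s))$ in a way that makes the sign far from transparent; the constraints $0<\alpha_1,\alpha_2<1$ and $\Real(s)>1$ alone do not obviously force it. Without actually performing this verification, what you have is a strategy, not a proof.

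The paper avoids this difficulty entirely by taking a direct pointwise route rather than a monotonicity argument. It uses the explicit modulus formula of Lemma~\ref{lem:Psi_ComplexRepresentation},
\[
\bigl|\widehat\Psi_{\alpha_1,\alpha_2}^{[h]}(z)\bigr|
=\exp\Bigl(A(x,y)\bigl(\ln h-\tfrac12\ln((1-x)^2+y^2)\bigr)+\theta_z\,B(x,y)\Bigr),
\]
and bounds each factor separately for $z=\rho\eu^{\iu\theta}$: elementary algebra gives $0\le A(x,y)\le A(r,0)$; the hypothesis $h<1-r$ forces the logarithmic factor to satisfy $\ln h-\tfrac12\ln((1-x)^2+y^2)\le\ln h-\ln(1-r)\le 0$; and a short sign analysis (splitting on $\alpha_1\lessgtr\alpha_2$ and on the sign of $y$) yields $\theta_z\,B(x,y)\le 0$. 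Combining these three inequalities gives the bound directly---no maximum-modulus principle, no differentiation, no monotonicity. If you wish to pursue your approach you must actually establish the sign of $\Imag[F'(s(\theta))\eu^{\iu\theta}]$; but the paper's route is both shorter and fully explicit, and you should compare it with your outline to see how the decomposition into the real quantities $A(x,y)$ and $B(x,y)$ sidesteps the analytic-function machinery altogether.
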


\begin{proof}
	We assume $\alpha_1 < \alpha_2$; when $\alpha_2 < \alpha_1$ the proof is made in a symmetric way. 
	Denote with $x$ and $y$ respectively the real and imaginary parts of $z$, i.e. $x = \rho \cos \theta$ and $y= \rho \sin \theta$ and consider, from Lemma \ref{lem:Psi_ComplexRepresentation} the representation \eqref{eq:Psi_ComplexRepresentation} for $\bigl| \Psi_{\alpha_1,\alpha_2}^{[h]}(z) \bigr|$. It is easy to observe that 
	\[
	A(x,y) \le A(x,0) \le \frac{ \alpha_1(1-\rho)+\alpha_2 ch}{ch + 1-\rho} = A(\rho,0) < A(r,0)
	\]
	and $A(x,y) \ge 0$ since $x<1$. Moreover, $\ln \bigl((1-x)^2+y^2\bigr) = 1 - 2 \rho \cos \theta + \rho^2$ and hence
	\[
	\ln h -\frac{1}{2} \ln \bigl((1-x)^2+y^2) \le \ln h - \ln(1-\rho) < h - \ln(1-r) 
	\]	
	and $\ln h - \ln(1-r) \le 0$ since we assumed $h < 1-r$. Therefore, 
	\[
	0 \ge A(x,y) \left( \ln h -\frac{1}{2} \ln \bigl((1-x)^2+y^2) \right) \ge A(r,0) \left( \ln h - \ln(1-r) \right) .
	\]
	
	Let $\theta_z = \arg(1-z)$ and, since $\alpha_1 < \alpha_2$, it is $\theta_z B(x,y) \le 0$. Indeed, when $0\le\theta<\pi$ it is $y \ge 0$ and $\theta_z \le 0$, whilst when  $-\pi\le\theta<0$ it is $y \le 0$ and $\theta_z \ge 0$ (in addition it is always $c>0$ and $h>0$). 	Therefore, from \eqref{eq:Psi_ComplexRepresentation}  we obtain
	\[
	\bigl| \Psi_{\alpha_1,\alpha_2}^{[h]}(z) \bigr| \le \exp \left( A(r,0) \left( \ln h - \ln(1-r) \right) \right) =  \bigl|\Psi_{\alpha_1,\alpha_2}^{[h]}(r) \bigr|
	\]
	which allows to conclude the proof. \qed
\end{proof}

\section{Numerical experiments}\label{S:NumericalExperiments}

\blfootnote{Matlab codes employed in this Section are available at \url{https://www.mathworks.com/matlabcentral/fileexchange/133232-fde_vo_exponential}}

To validate the numerical scheme discussed in the previous section, we perform here some tests. {Although the  scheme can be applied, in principle, to any linear or nonlinear problem, we restrict experiments to just some linear problems since the nonlinear case could require a more detailed analysis.}  

It is quite natural to start by testing the method with the relaxation equation \eqref{eq:S_Relaxation} for which a reference solution can be approximated, with high accuracy, by numerical inversion of the LT as explained, and already exploited, in Subsection \ref{SS:RelaxEquation} (it is worth mentioning that no exact solutions are in general available with VO-FDEs).

For different sets of values $\alpha_1$, $\alpha_2$, $c$ and $\lambda$, in Figure \ref{fig:Fig_RelaxEq} we show solutions $y^{\star}(t)$ obtained by inversion of the LT and used as reference solutions.

\begin{figure}[ht]
	\centering
	\includegraphics[width=0.60\textwidth]{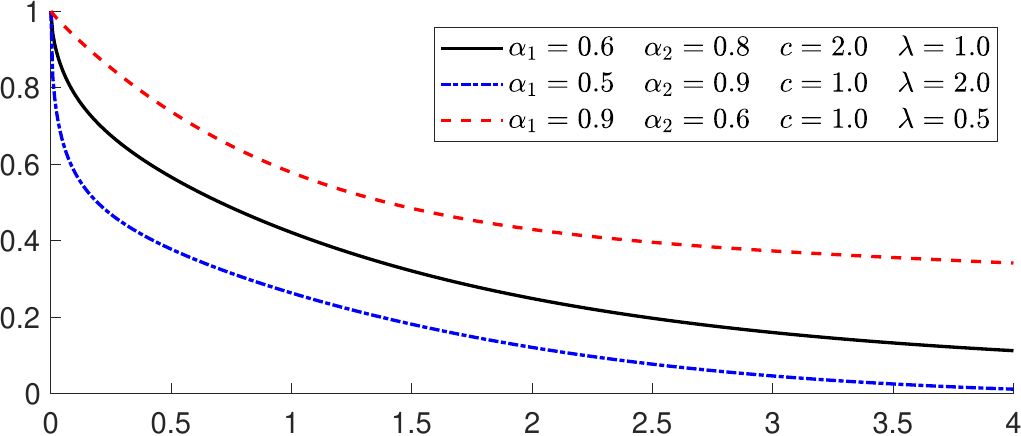}
	\caption{Solutions of the relaxation equation \eqref{eq:S_Relaxation} for some sets of parameters.}\label{fig:Fig_RelaxEq}
\end{figure}

In Table \ref{tab:Error_EOC_RelaxEq} we collect, for a decreasing sequence of the step-size $h$, errors $E(h)$ at $T=4$ between  numerical solutions $y_h(T)$ and references $y^{\star}(T)$, together with the Estimated Order of Convergence (EOC) evaluated as
\[
\text{EOC} = \log_2 \frac{ E(h) }{ E(h/2) }, \quad
E(h)=|y^{\star}(T)-y_h(T)|.
\]

\begin{table}[ht]
	\footnotesize
	\[
	\begin{array}{|r|cc|cc|cc|} \hline
		& \multicolumn{2}{|c|}{\alpha_1=0.6 \;\; \alpha_2=0.8}& \multicolumn{2}{|c|}{\alpha_1=0.5 \;\; \alpha_2=0.9}& \multicolumn{2}{|c|}{\alpha_1=0.9 \;\; \alpha_2=0.6}\\ 
		& \multicolumn{2}{|c|}{c=2.0 \;\; \lambda=1.0}& \multicolumn{2}{|c|}{c=1.0 \;\; \lambda=2.0}& \multicolumn{2}{|c|}{c=1.0 \;\; \lambda=0.5}\\ 
		h & \textrm{Error} & \textrm{EOC} & \textrm{Error} & \textrm{EOC} & \textrm{Error} & \textrm{EOC} \\ \hline
		2^{-2} & 9.96(-3) & & 1.02(-2) & & 3.71(-3) & \\ 
		2^{-3} & 4.97(-3) & 1.004 & 5.14(-3) & 0.981 & 1.67(-3) & 1.146 \\ 
		2^{-4} & 2.48(-3) & 1.003 & 2.59(-3) & 0.991 & 7.89(-4) & 1.085 \\ 
		2^{-5} & 1.24(-3) & 1.002 & 1.30(-3) & 0.995 & 3.82(-4) & 1.046 \\ 
		2^{-6} & 6.18(-4) & 1.001 & 6.50(-4) & 0.998 & 1.88(-4) & 1.024 \\ 
		2^{-7} & 3.09(-4) & 1.000 & 3.25(-4) & 0.999 & 9.31(-5) & 1.012 \\ 
		\hline
	\end{array}
	\]
	\normalsize
	\caption{Errors (at $T=4$) and EOCs of the numerical method applied to the relaxation equation \eqref{eq:S_Relaxation}} 
	\label{tab:Error_EOC_RelaxEq} 
\end{table}

As we can clearly observe, the test confirms that, as predicted from theory, the scheme preserves the first order convergence in the VO case as well. 

{We introduce now in \eqref{eq:S_Relaxation} a forcing term to obtain the VO-FDE
	\begin{equation}\label{eq:LinearForced}
		\left\{\begin{array}{l}
			\DS^{\alpha(t)}_0 x(t) = \lambda x(t) + \sin t \\
			x(t) = x_0, 
		\end{array}\right. 
		, \quad t \in (0,T]. \\
	\end{equation}
	
	The reference solutions, presented in Figure \ref{fig:Fig_LinearForced}, are now  evaluated by the same method proposed in this work but with the smaller step-size $h=2^{-10}$.
}

\begin{figure}[ht]
	{\centering
		\includegraphics[width=0.60\textwidth]{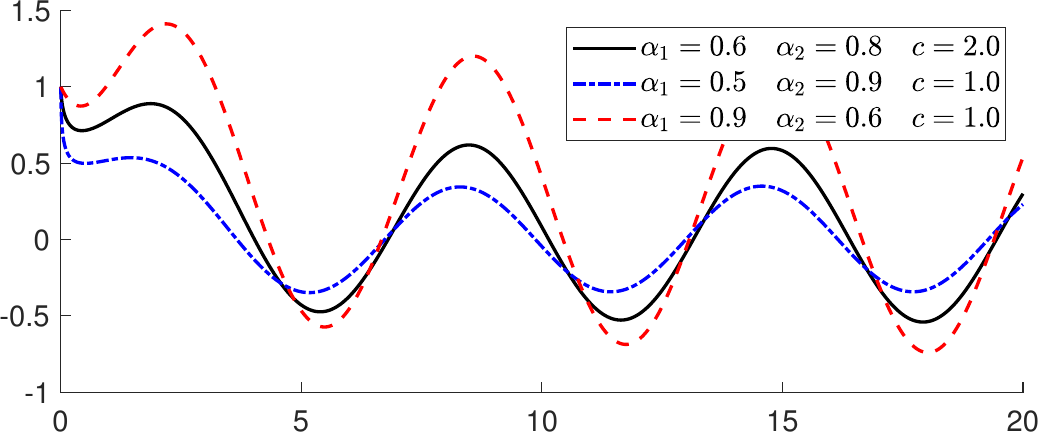}
		\caption{Solutions of the linear equation with forcing term \eqref{eq:LinearForced} for some sets of parameters.}\label{fig:Fig_LinearForced}
	} 
\end{figure}

{Also with the test problem \eqref{eq:LinearForced} the comparison between reference and numerical solutions allows to confirm the first-order convergence as one can clearly observe from Table \ref{tab:Error_EOC_LinearForced}.
}

\begin{table}[htb]
	{\footnotesize
		\[
		\begin{array}{|r|cc|cc|cc|} \hline
			& \multicolumn{2}{|c|}{\alpha_1=0.6 \;\; \alpha_2=0.8}& \multicolumn{2}{|c|}{\alpha_1=0.5 \;\; \alpha_2=0.9}& \multicolumn{2}{|c|}{\alpha_1=0.9 \;\; \alpha_2=0.6}\\ 
			& \multicolumn{2}{|c|}{c=2.0}& \multicolumn{2}{|c|}{c=1.0}& \multicolumn{2}{|c|}{c=1.0}\\ 
			h & \textrm{Error} & \textrm{EOC} & \textrm{Error} & \textrm{EOC} & \textrm{Error} & \textrm{EOC} \\ \hline
			2^{-2} & 1.88(-2) & & 9.74(-3) & & 1.32(-2) & \\ 
			2^{-3} & 9.60(-3) & 0.972 & 5.05(-3) & 0.948 & 6.21(-3) & 1.087 \\ 
			2^{-4} & 4.82(-3) & 0.994 & 2.56(-3) & 0.981 & 2.98(-3) & 1.059 \\ 
			2^{-5} & 2.39(-3) & 1.014 & 1.27(-3) & 1.007 & 1.44(-3) & 1.049 \\ 
			2^{-6} & 1.16(-3) & 1.043 & 6.19(-4) & 1.039 & 6.90(-4) & 1.061 \\ 
			2^{-7} & 5.42(-4) & 1.097 & 2.90(-4) & 1.095 & 3.21(-4) & 1.107 \\ 
			\hline
		\end{array}
		\]
		\normalsize
		\caption{Errors (at $T=20$) and EOCs of the numerical method applied to the linear equation with forcing term \eqref{eq:LinearForced}} 
		\label{tab:Error_EOC_LinearForced} 
	}
\end{table}

{We now build the $2$-dimensional linear system of  VO-FDEs
	\begin{equation}\label{eq:LinearSystem}
		\left\{\begin{array}{l}
			\DS^{\alpha(t)}_0 x(t) = p^{-} x(t) + y(t) + 1 \\
			\DS^{\alpha(t)}_0 y(t) = -2 x(t) + p^{+} y(t) + 2 , \quad t \in (0,T], \\
			x(t) = x_0, \, \, y(t) = y_0,
		\end{array}\right. 
	\end{equation}
	by selecting $p^{\pm} = \cos \theta \pm \sqrt {\cos^2 \theta +1 }$ and choosing  the parameter $\theta$ so that when the same system has CO derivatives of order $\alpha_1$ and $\alpha_2$, it shows a stable behavior. In particular, $\theta$ is selected in order to eigenvalues of the matrix system of \eqref{eq:LinearSystem} lie in  the sector $|\arg(z)| > \sigma$, $\sigma =\max\{\alpha_1,\alpha_2\}\pi/2$, and we used  the value $\theta = 1.1 \cdot \sigma$ in all experiments.

	The reference solutions are presented in Figure \ref{fig:Fig_LinearSystem}. We have presented here a comparison with the solutions of the same system with CO derivatives of order $\alpha_1$ and $\alpha_2$ to show how the behaviour for small $t$ (the box in each plot) and for large $t$ follows the behaviour of solutions of the CO system with order $\alpha_1$ and $\alpha_2$ respectively.
}

\begin{figure}[ht]
	{\centering
		\begin{tabular}{cc}
			\includegraphics[width=0.45\textwidth]{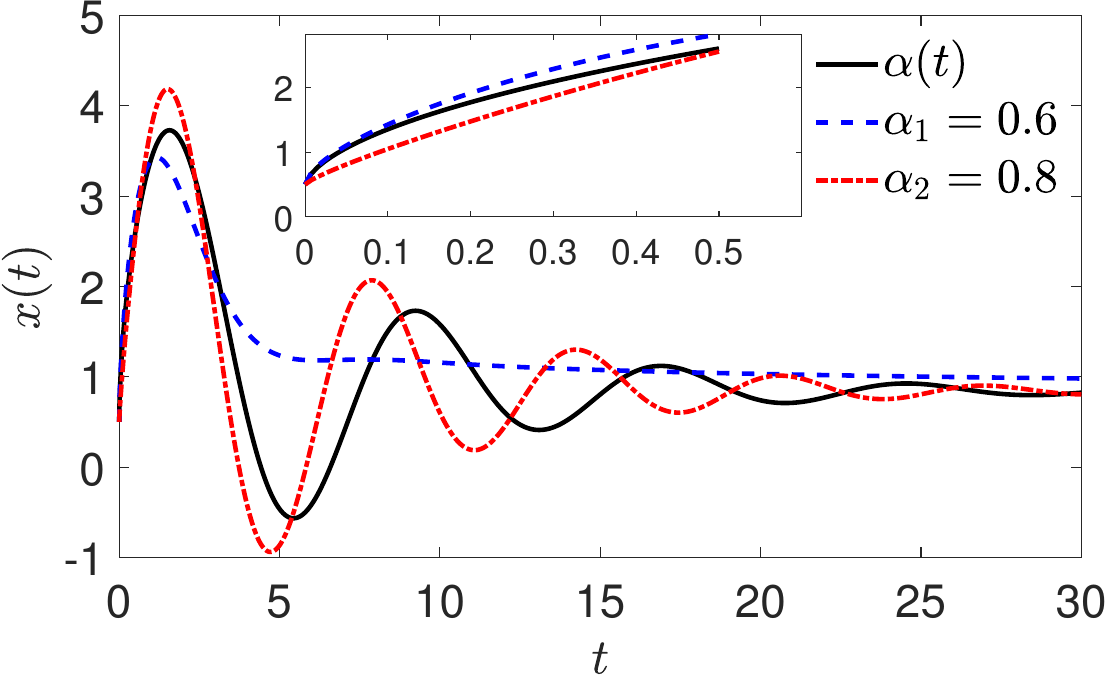} &
			\includegraphics[width=0.45\textwidth]{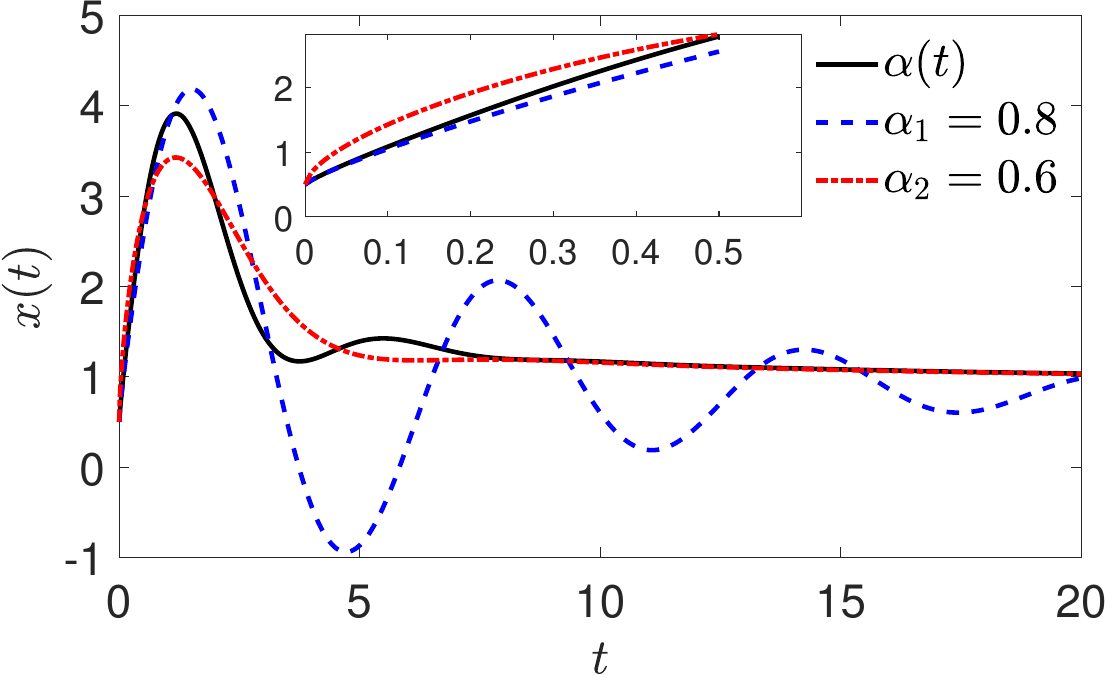} \\
			\includegraphics[width=0.45\textwidth]{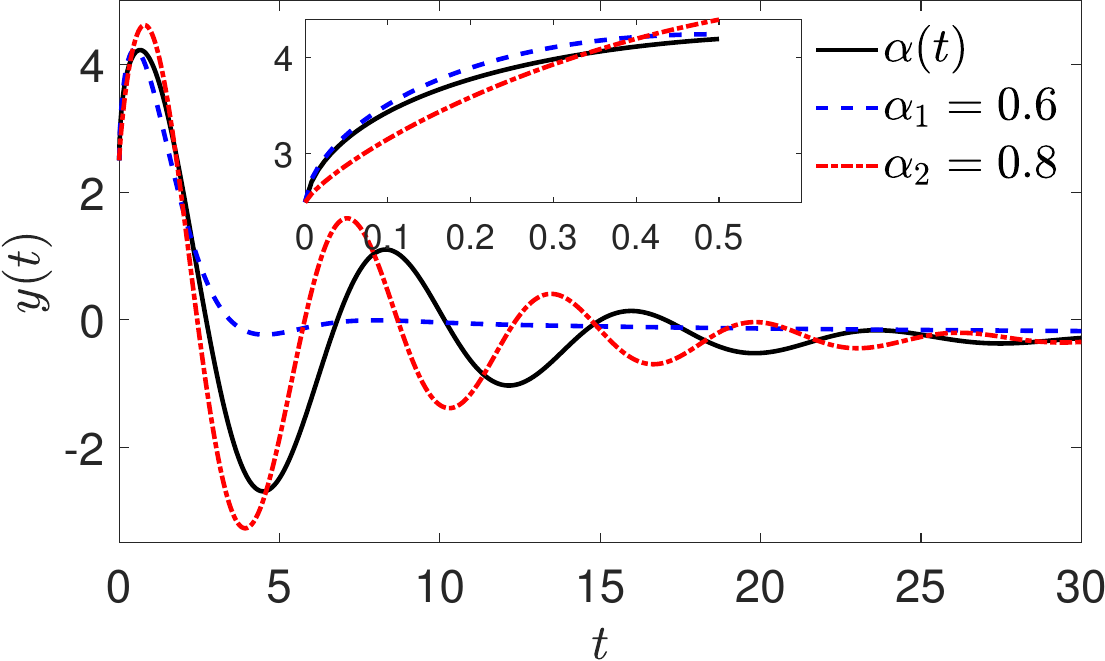} &
			\includegraphics[width=0.45\textwidth]{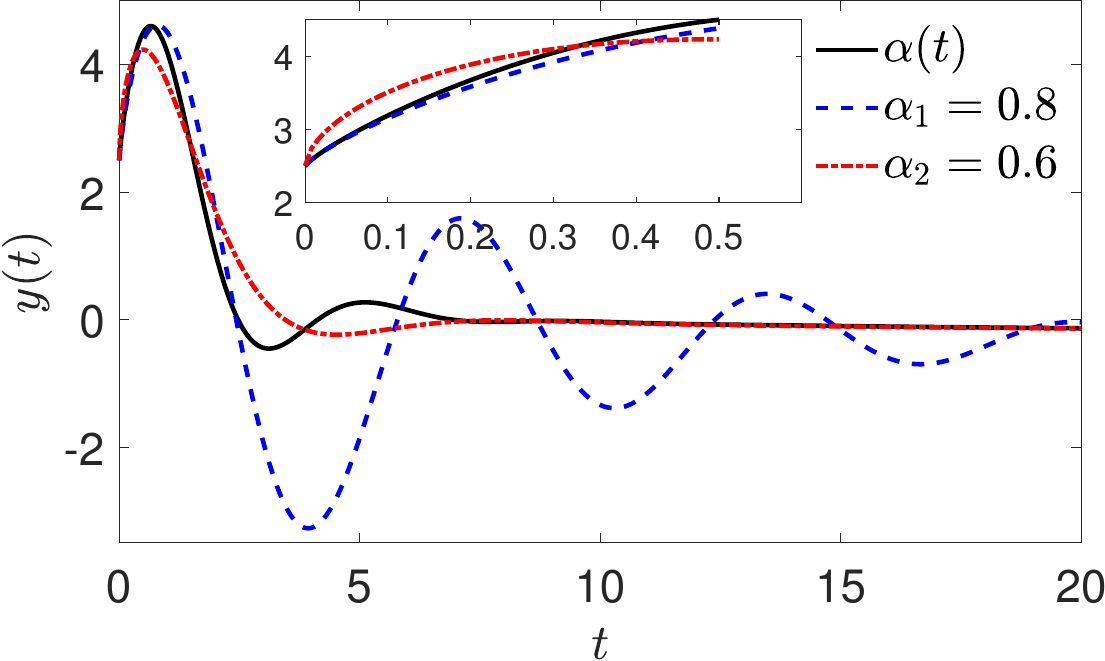} \\
		\end{tabular}
		\caption{Solutions of the linear system \eqref{eq:LinearSystem} for different values of $\alpha_1$ and $\alpha_2$ with $c=2.0$.}\label{fig:Fig_LinearSystem}
	}
\end{figure}

{Also with the linear system  \eqref{eq:LinearSystem} the comparison between reference and numerical solutions allows to confirm the first-order convergence of the numerical method under investigation, as shown in Table \ref{tab:LinearSystem_Error_EOC_System}.
}

\begin{table}[ht]
	{\footnotesize
		\[
		\begin{array}{|r|cc|cc|cc|} \hline
			& \multicolumn{2}{|c|}{\alpha_1=0.6 \;\; \alpha_2=0.8}& \multicolumn{2}{|c|}{\alpha_1=0.8 \;\; \alpha_2=0.6}& \multicolumn{2}{|c|}{\alpha_1=0.5 \;\; \alpha_2=0.8}\\ 
			& \multicolumn{2}{|c|}{c=2.0 }& \multicolumn{2}{|c|}{c=2.0 }& \multicolumn{2}{|c|}{c=1.0 }\\ 
			h & \textrm{Error} & \textrm{EOC} & \textrm{Error} & \textrm{EOC} & \textrm{Error} & \textrm{EOC} \\ \hline
			2^{-3} & 2.08(-1) & & 1.80(-2) & & 6.43(-2) & \\ 
			2^{-4} & 1.12(-1) & 0.892 & 8.56(-3) & 1.069 & 3.36(-2) & 0.935 \\ 
			2^{-5} & 5.74(-2) & 0.962 & 3.98(-3) & 1.102 & 1.70(-2) & 0.986 \\ 
			2^{-6} & 2.84(-2) & 1.016 & 1.85(-3) & 1.106 & 8.31(-3) & 1.029 \\ 
			2^{-7} & 1.34(-2) & 1.084 & 8.42(-4) & 1.135 & 3.90(-3) & 1.091 \\ 
			2^{-8} & 5.77(-3) & 1.215 & 3.56(-4) & 1.242 & 1.68(-3) & 1.218 \\ 
			\hline
		\end{array}
		\]
		\normalsize
		\caption{Errors (at $T=8$) evaluated in $\|\cdot\|_2$ and EOCs of the numerical method applied to the linear system \eqref{eq:LinearSystem}} 
		\label{tab:LinearSystem_Error_EOC_System} 
	}
\end{table}

\section{Concluding remarks}
\label{sec:conc}
In this work we have discussed in detail both theory and computation of some VO-FDEs with order transition functions of exponential type. Specifically, first we have investigated the relationships between VO operators and CO one at early and late times. We have presented a numerical method, based on Lubich's CQRs, that extends the backward Euler scheme to the VO case. In particular, we have developed an algorithm for the computation of weights based on a detailed error analysis. Numerical experiments have, further, shown the robustness of the proposed approach.

In order to obtain more accurate numerical solutions, the natural continuation of the proposed research will entail the development of higher-order methods, for which a preliminary analysis of solution regularity at the origin is necessary to properly select regularization weights \cite{Lubich1988a,Lubich1988b,Lubich2004}. Some seeds for this analysis are in Section \ref{S:Propr_TransFunction_Exp} at least for what concerns relaxation equations. However this matter deserves a deeper discussion which is left to a future study.

It will be, moreover, of particular interest to extend the methodology presented in this study to other classes of transition functions $\alpha(t)$. In particular, to address the problem of VO operators with transition functions $\alpha(t)$ for which the analytic expression of their LT is not known (thus to avoid condition A2 in Section \ref{S:VOGeneral}). Clearly, such a problem can only be tackled at the numerical. It is worth mentioning that this research line is mostly uncharted territory to date \cite{WeidemanFornberg2023}, although it has the potential of becoming a stimulating and prolific research topic in the near future.

\section*{Acknowledgments}
The work of R.G. is partially supported by the MIUR under the PRIN2017 project n. 2017E844SL and by the INdAM under the  GNCS Project E53C22001930001. The work of A.G. has been carried out in the framework of activities of the National Group for Mathematical Physics (GNFM, INdAM).
\newpage

\begin{thebibliography}{10}

\bibitem{CuestaKiraneAlsaediAhmad2021}
{\sc E.~Cuesta, M.~Kirane, A.~Alsaedi, and B.~Ahmad}, {\em On the sub-diffusion
  fractional initial value problem with time variable order}, Adv. Nonlinear
  Anal., 10 (2021), pp.~1301--1315.

\bibitem{DarveDeliaGarrappaGiustiRUbio2022}
{\sc E.~Darve, M.~D'Elia, R.~Garrappa, A.~Giusti, and N.~L. Rubio}, {\em On the
  fractional {L}aplacian of variable order}, Fract. Calc. Appl. Anal., 25
  (2022), pp.~15--28.

\bibitem{Diethelm2010}
{\sc K.~Diethelm}, {\em {The Analysis of Fractional Differential Equations}},
  vol.~2004 of Lecture Notes in Mathematics, Springer-Verlag, Berlin, 2010.

\bibitem{DiethlemGarrappaGiustiStynes2020}
{\sc K.~Diethlem, R.~Garrappa, A.~Giusti, and M.~Stynes}, {\em Why fractional
  derivatives with nonsingular kernels should not be used}, Fract. Calc. Appl.
  Anal., 23 (2020), pp.~610--634.

\bibitem{DuSunWang2022}
{\sc R.-L. Du, Z.-Z. Sun, and H.~Wang}, {\em Temporal second-order finite
  difference schemes for variable-order time-fractional wave equations}, SIAM
  J. Numer. Anal., 60 (2022), pp.~104--132.


\bibitem{GarrappaGiustiMainardi2021}
{\sc R.~Garrappa, A.~Giusti, and F.~Mainardi}, {\em Variable-order fractional
  calculus: a change of perspective}, Commun. Nonlinear Sci. Numer. Simul., 102
  (2021), pp.~Paper No. 105904, 16.

\bibitem{Giusti:2020rul}
{\sc A.~Giusti}, {\em {MOND-like Fractional Laplacian Theory}}, Phys. Rev. D,
  101 (2020), 124029.

\bibitem{Giusti:2020kcv}
{\sc A.~Giusti, R.~Garrappa, and G.~Vachon}, {\em {On the Kuzmin model in
  fractional Newtonian gravity}}, Eur. Phys. J. Plus, 135 (2020), 798.

\bibitem{GorenfloAbdelRehim2007}
{\sc R.~Gorenflo and E.~Abdel-Rehim}, {\em Convergence of the
  {G}r\"{u}nwald-{L}etnikov scheme for time-fractional diffusion}, J. Comput.
  Appl. Math., 205 (2007), pp.~871 -- 881.

\bibitem{GorenfloKilbasMainardiRogosin2020}
{\sc R.~Gorenflo, A.~A. Kilbas, F.~Mainardi, and S.~V. Rogosin}, {\em
  Mittag-{L}effler {Functions, Related Topics and Applications}}, Springer
  Monographs in Mathematics, Springer-Verlag Berlin Heidelberg, 2020.

\bibitem{Grove1991}
{\sc A.~C. Grove}, {\em An introduction to the {L}aplace transform and the
  {$z$} transform}, Prentice Hall, Inc., Englewood Cliffs, NJ, 1991.

\bibitem{Hanyga2020}
{\sc A.~Hanyga}, {\em A comment on a controversial issue: a generalized
  fractional derivative cannot have a regular kernel}, Fract. Calc. Appl.
  Anal., 23 (2020), pp.~211--223.

\bibitem{LePage1980}
{\sc W.~R. LePage}, {\em Complex variables and the {L}aplace transform for
  engineers}, Dover Publications, Inc., New York, 1980.
\newblock Corrected reprint of the 1961 original.

\bibitem{Lubich1988a}
{\sc C.~Lubich}, {\em Convolution quadrature and discretized operational
  calculus. {I}}, Numer. Math., 52 (1988), pp.~129--145.

\bibitem{Lubich1988b}
{\sc C.~Lubich}, {\em Convolution quadrature and discretized operational
  calculus. {II}}, Numer. Math., 52 (1988), pp.~413--425.

\bibitem{Lubich2004}
{\sc C.~Lubich}, {\em Convolution quadrature revisited}, BIT, 44 (2004),
  pp.~503--514.

\bibitem{Luchko2020_FCAA}
{\sc Y.~Luchko}, {\em Fractional derivatives and the fundamental theorem of
  fractional calculus}, Fract. Calc. Appl. Anal., 23 (2020), pp.~939--966.

\bibitem{Luchko2021_Mathematics}
{\sc Y.~Luchko}, {\em General fractional integrals and derivatives with the
  {S}onine kernels}, Mathematics, 9 (2021), p.~594.

\bibitem{Luchko2021_FCAA}
{\sc Y.~Luchko}, {\em Operational calculus for the general fractional
  derivative and its applications}, Fract. Calc. Appl. Anal., 24 (2021),
  pp.~338--375.

\bibitem{LuchkoYamamoto2020}
{\sc Y.~Luchko and M.~Yamamoto}, {\em The general fractional derivative and
  related fractional differential equations}, Mathematics, 8 (2020), p.~2115.

\bibitem{OngunArslanGarrappa2013}
{\sc M.~Y.~t. Ongun, D.~Arslan, and R.~Garrappa}, {\em Nonstandard finite
  difference schemes for a fractional-order {B}russelator system}, Adv.
  Difference Equ.,  (2013), pp.~2013:102, 13.

\bibitem{Podlubny1999}
{\sc I.~Podlubny}, {\em Fractional differential equations}, Academic Press
  Inc., San Diego, CA, 1999.

\bibitem{Rasof1962}
{\sc B.~Rasof}, {\em The initial- and final-value theorems in {L}aplace
  transform theory}, J. Franklin Inst., 274 (1962), pp.~165--177.

\bibitem{Samko1995a}
{\sc S.~G. Samko}, {\em Fractional integration and differentiation of variable
  order}, Anal. Math., 21 (1995), pp.~213--236.

\bibitem{SamkoCardoso2003b}
{\sc S.~G. Samko and R.~P. Cardoso}, {\em Integral equations of the first kind
  of {S}onine type}, Int. J. Math. Math. Sci.,  (2003), pp.~3609--3632.

\bibitem{SamkoCardoso2003a}
{\sc S.~G. Samko and R.~P. Cardoso}, {\em Sonine integral equations of the
  first kind in {$L_p(0,b)$}}, Fract. Calc. Appl. Anal., 6 (2003),
  pp.~235--258.

\bibitem{SamkoKilbasMarichev1993}
{\sc S.~G. Samko, A.~A. Kilbas, and O.~I. Marichev}, {\em Fractional integrals
  and derivatives}, Gordon and Breach Science Publishers, Yverdon, 1993.

\bibitem{SamkoRoss1993}
{\sc S.~G. Samko and B.~Ross}, {\em Integration and differentiation to a
  variable fractional order}, Integral Transform. Spec. Funct., 1 (1993),
  pp.~277--300.

\bibitem{Scarpi1972a}
{\sc G.~Scarpi}, {\em Sopra il moto laminare di liquidi a viscosist\`a
  variabile nel tempo}, Atti. Accademia delle Scienze, Isitituto di Bologna,
  Rendiconti (Ser. XII), 9 (1972), pp.~54--68.

\bibitem{Scarpi1972b}
{\sc G.~Scarpi}, {\em Sulla possibilit\`a di un modello reologico intermedio di
  tipo evolutivo}, Atti Accad. Naz. Lincei Rend. Cl. Sci. Fis. Mat. Nat. (8),
  52 (1972), pp.~912--917 (1973).

\bibitem{SchererKallaTangHuang2011}
{\sc R.~Scherer, S.~L. Kalla, Y.~Tang, and J.~Huang}, {\em The
  {G}r\"{u}nwald-{L}etnikov method for fractional differential equations},
  Comput. Math. Appl., 62 (2011), pp.~902--917.

\bibitem{Sonine1884}
{\sc N.~Sonine}, {\em Sur la g\'{e}n\'{e}ralisation d'une formule d'{A}bel},
  Acta Math., 4 (1884), pp.~171--176.

\bibitem{SunChenLiChen2022}
{\sc H.~Sun, W.~Chen, C.~Li, and Y.~Chen}, {\em Finite difference schemes for
  variable-order time fractional diffusion equation}, Internat. J. Bifur. Chaos
  Appl. Sci. Engrg., 22 (2012), pp.~1250085, 16.

\bibitem{TrefethenWeideman2014}
{\sc L.~N. Trefethen and J.~A.~C. Weideman}, {\em The exponentially convergent
  trapezoidal rule}, SIAM Rev., 56 (2014), pp.~385--458.

\bibitem{WangLi2007}
{\sc Y.~Wang and C.~Li}, {\em Does the fractional {B}russelator with efficient
  dimension less than 1 have a limit cycle?}, Phys. Lett. A: Gen. At. Solid
  State Phys., 363 (2007), pp.~414--419.

\bibitem{WeidemanFornberg2023}
{\sc J.~A.~C. Weideman and B.~Fornberg}, {\em Fully numerical {L}aplace
  transform methods}, Numer. Algorithms, 92 (2023), pp.~985--1006.

\bibitem{WeidemanTrefethen2007}
{\sc J.~A.~C. Weideman and L.~N. Trefethen}, {\em Parabolic and hyperbolic
  contours for computing the {B}romwich integral}, Math. Comp., 76 (2007),
  pp.~1341--1356.

\bibitem{ZayernouriKarniadakis2015}
{\sc M.~Zayernouri and G.~E. Karniadakis}, {\em Fractional spectral collocation
  methods for linear and nonlinear variable order {FPDE}s}, J. Comput. Phys.,
  293 (2015), pp.~312--338.

\bibitem{ZengZhangKarniadakis2015}
{\sc F.~Zeng, Z.~Zhang, and G.~E. Karniadakis}, {\em A generalized spectral
  collocation method with tunable accuracy for variable-order fractional
  differential equations}, SIAM J. Sci. Comput., 37 (2015), pp.~A2710--A2732.

\bibitem{ZhaoSunKarniadakis2015}
{\sc X.~Zhao, Z.-Z. Sun, and G.~E. Karniadakis}, {\em Second-order
  approximations for variable order fractional derivatives: algorithms and
  applications}, J. Comput. Phys., 293 (2015), pp.~184--200.

\bibitem{ZhengWang2020}
{\sc X.~Zheng and H.~Wang}, {\em An error estimate of a numerical approximation
  to a hidden-memory variable-order space-time fractional diffusion equation},
  SIAM J. Numer. Anal., 58 (2020), pp.~2492--2514.
 
\bibitem{Zheng2022}
{\sc X.~Zheng}, {\em Approximate inversion for {A}bel integral operators of
	variable exponent and applications to fractional {C}auchy
	problems},
Fract. Calc. Appl. Anal., 25 (20220), pp.~1585--1603.
  
 

\bibitem{ZhuangLiuAnhTurner2009}
{\sc P.~Zhuang, F.~Liu, V.~Anh, and I.~Turner}, {\em Numerical methods for the
  variable-order fractional advection-diffusion equation with a nonlinear
  source term}, SIAM J. Numer. Anal., 47 (2009), pp.~1760--1781.

\end{thebibliography}
\end{document}